\documentclass[a4paper,11pt]{amsart}

\usepackage{amssymb}
\usepackage{epsfig}
\usepackage{amsfonts}
\usepackage{amsmath}
\usepackage{euscript}
\usepackage{amscd}
\usepackage{amsthm}
\usepackage{bbm}
\usepackage[all,cmtip]{xy}
\DeclareMathAlphabet{\mathpzc}{OT1}{pzc}{m}{it}

\usepackage{soul}
\setstcolor{red}

\makeatletter
\@namedef{subjclassname@2020}{%
  \textup{2020} Mathematics Subject Classification}
\makeatother

\usepackage[colorinlistoftodos,textwidth=2cm,textsize=small]{todonotes}
\usepackage{verbatim}
\usepackage[pagebackref,hypertexnames=false, colorlinks, citecolor=red,linkcolor=blue, urlcolor=red]{hyperref}
\usepackage{mathrsfs}
\usepackage{tikz}

\textwidth 6.375in \textheight 8.875in \oddsidemargin0.00in


\theoremstyle{plain}

\newtheorem{thm}{Theorem}[section]
\newtheorem{cor}[thm]{Corollary}
\newtheorem{lem}[thm]{Lemma}
\newtheorem{prop}[thm]{Proposition}
\theoremstyle{definition}

\newtheorem{rem}[thm]{Remark}

\numberwithin{equation}{section}


\def\C{{\mathbb C}}
\def\D{{\mathbb D}}
\def\B{{\mathbb B}}

\def\H{{\mathcal H}}

\def\dbar{\bar\partial}

\def\inp#1,#2{\left\langle{#1},{#2}\right\rangle}

\def\N{\mathbb{N}}

\def\R{\mathbb{R}}

\def\si{\sigma}

\def\l{\lambda}

\def\L{\Lambda}
\def\ra{\rightarrow}
\def\ov{\overline}

\def\a{\alpha}
\def\b{\beta}
\def\g{\gamma}
\def\G{\Gamma}

\def\h{ hermitian holomorphic vector bundle}

\def\w{with respect to }

\def\beq{\begin{eqnarray}}
\def\eeq{\end{eqnarray}}
\def\beqa{\begin{eqnarray*}}
\def\eeqa{\end{eqnarray*}}

\def\del{\partial}

\def\M{\boldsymbol{M}}

\def\<{\langle}
\def\>{\rangle}

\def\bz{\boldsymbol{z}}
\def\bw{\boldsymbol{w}}


\newcommand{\be}{\begin{equation}}
\newcommand{\ee}{\end{equation}}
\newcommand{\bea}{\begin{eqnarray}}
\newcommand{\eea}{\end{eqnarray}}
\newcommand{\Bea}{\begin{eqnarray*}}
\newcommand{\Eea}{\end{eqnarray*}}

\newcounter{cnt1}
\newcounter{cnt2}
\newcounter{cnt3}
\newcommand{\blr}{\begin{list}{$($\roman{cnt1}$)$}
 {\usecounter{cnt1} \setlength{\topsep}{0pt}
 \setlength{\itemsep}{0pt}}}
\newcommand{\bla}{\begin{list}{$($\alph{cnt2}$)$}
 {\usecounter{cnt2} \setlength{\topsep}{0pt}
 \setlength{\itemsep}{0pt}}}
\newcommand{\bln}{\begin{list}{$($\arabic{cnt3}$)$}
 {\usecounter{cnt3} \setlength{\topsep}{0pt}
 \setlength{\itemsep}{0pt}}}
\newcommand{\el}{\end{list}}
\newcommand{\overbar}[1]{\mkern 1.5mu\overline{\mkern-1.5mu#1\mkern-1.5mu}\mkern 1.5mu}

\DeclareMathOperator{\Aut}{Aut}\DeclareMathOperator{\aut}{Aut}
\DeclareMathOperator{\mob}{\text{M\"ob}}

\newcounter{defcounter}
\setcounter{defcounter}{0}

\begin{document}
\title[Homogeneous operators]{A Family of Homogeneous Operators In The Cowen-Douglas Class Over  The Poly-disc}
\author[P. Deb]{Prahllad Deb}
\address[P. Deb]{Department of Mathematics, Ben-Gurion University of the Negev, Beer-Sheva, 84105, Israel}
\email[P.Deb]{prahllad.deb@gmail.com} 
\author[S. Hazra]{Somnath Hazra}
\address[S. Hazra]{Mathematical Institute in Opava, Silesian University in Opava, Na Rybn\'icku 626/1, 74601, Opava, Czechia}
\email[S. Hazra]{somnath.hazra.2008@gmail.com} 

\keywords{Cowen-Douglas class, Homogeneous operators, Hermitian holomorphic homogeneous vector bundles, Curvature.} 
\subjclass[2020]{Primary: 47B13, 47B32, Secondary: 20C25, 53C07}
\thanks{The research of the first named author was supported by J C Bose National Fellowship of G. Misra (DSTO 1984) followed by a postdoctoral fellowship at Ben-Gurion University of the Negev, Israel (850053497). The research of the second named author was supported by a post-doctoral Fellowship of the NBHM followed by GA CR grant no. 21-27941S}

\begin{abstract} 
We construct a large family of positive-definite kernels $K: \mathbb{D}^n\times \mathbb{D}^n \to \mbox{\rm M} (r, \mathbb C)$, holomorphic in the first variable and anti-holomorphic in the second, that are quasi-invariant with respect to the subgroup $\mob \times\cdots\times \mob$ ($n$ times) of the bi-holomorphic automorphism group of $\mathbb{D}^n$.  The adjoint of the $n$ - tuples of multiplication operators by the co-ordinate functions on the Hilbert spaces $\mathcal H_K$ determined by $K$ is then homogeneous with respect to this subgroup. We show that these $n$ - tuples are irreducible, are in the Cowen-Douglas class $\mathrm B_r(\mathbb D^n)$ and that they are mutually pairwise unitarily inequivalent.
\end{abstract}


\maketitle


\section{Introduction}
Let $\mob$ denote the bi-holomorphic automorphism group of the unit disc $\D:=\{z\in \mathbb C: |z| < 1\}$. A bounded linear operator on a complex separable Hilbert space $\H$ is said to be homogeneous if the spectrum $\si(T)$ of $T$ is contained in $\overbar{\D}$ and, $g(T)$ is unitarily equivalent to $T$ for every $g$ in the M\"obius group $\mob$.  Indeed, since $g \in \mob$ is a rational function with pole outside the closed disc $\overbar{\mathbb D}$, it follows that $g(T)$ is well defined whenever the spectrum $\si(T)$ of $T$ is contained in $\overbar{\D}$. In a number of articles \cite{HOPRMGS, THS,  HOHS, ACHOCD, OICHO, HOJCS}, the class of homogeneous operators has been studied extensively. 

The notion of a homogeneous operator has a natural generalization to commuting tuples of operators. Let $\Omega$ be a bounded symmetric domain in $\C^n$ and $G$ be a subgroup of the bi-holomorphic automorphism group $\aut(\Omega)$ of $\Omega$. A commuting $n$ - tuple of operators $(T_1, T_2,\ldots ,T_n)$ is said to be homogeneous with respect to $G$ if the Taylor joint spectrum of $(T_1, T_2,\ldots , T_n)$ lies in $\overline{\Omega}$ and $g(T_1, T_2,\ldots , T_n)$, defined by the holomorphic functional calculus, is unitarily equivalent with $(T_1, T_2,\ldots ,T_n)$ for all $g \in G$. In \cite{HHVCDBSYMD}, Kor\'{a}nyi and Misra described all irreducible homogeneous tuples of operators in the Cowen-Douglas class over the open unit ball $\B^n := \{z = (z_1, \ldots, z_n) \in \C^n : \sum |z_i|^2 < 1\}$, see also \cite{HVBIOSD}. In this generality, the focus has been on the study of commuting tuples of homogeneous operators in the Cowen-Douglas class over an irreducible bounded symmetric domain. In a recent paper, we initiated the study of $n$ - tuples of operators homogeneous under the group  $\mob^n:=\mob \times \cdots \times \mob$.  The group  $\mob^n$ is evidently a subgroup of the bi-holomorphic automorphism group of $\mathbb D^n$ and acting transitively on $\mathbb D^n$.



In the recent paper \cite{HHHVB1}, all irreducible $n$ - tuples in $\mathrm B_r(\D^n)$, $r = 1, 2, 3$, homogeneous with respect to the group M\"ob$^n$ (respectively, $\Aut(\D^n)$) have been listed modulo unitary equivalence. It has been also noted, in the same article, that the representations associated to the $n$ - tuples of operators in $\mathrm B_r(\D^n)$ homogeneous with respect to M\"ob$^n$, $r =1, 2, 3$,  must be multiplicity-free. 

In general, the classification modulo unitary equivalence of all homogeneous tuples of operators in $\mathrm B_r(\D^n)$ for $r > 3$ is rather complicated.  Recall that if $T_i$, $i=1,\ldots, n-1$ are  homogeneous operators in $\mathrm B_1(\D)$ and $S$ is an irreducible homogeneous operator in $\mathrm B_r(\D)$, $r = 1$ or $2$, then setting $\hat{T}_i = I\otimes \cdots \otimes I \otimes T_i \otimes I \otimes \cdots \otimes I$ and $\hat{S}:= I\otimes \cdots \otimes I \otimes  S$, we infer that the operators of the form $(\hat{T}_1, \ldots , \hat{T}_{n-1}, \hat{S})$ are the only irreducible $n$ - tuples of operators (up to unitary equivalence) in $\mathrm B_r(\D^n)$, $r=1,2$, homogeneous \w $\mob^n$ (cf. \cite[Theorem 3.1, Theorem 6.7]{HHHVB1}).  

Continuing in the same manner, for $r \geq 3$, examples of irreducible $n$ - tuples $\boldsymbol{T} \in \mathrm B_r(\D^n)$ homogeneous under $\mob^n$ can be produced by taking tensor product, namely,
\begin{equation} \label{tensor}
\boldsymbol{T} = (\hat{T}^{(n_1)}, \ldots , \hat{T}^{(n_p)}),\,\,\mbox{\rm with}\,\, \hat{T}^{(n_i)} = I\otimes \cdots \otimes I \otimes \boldsymbol{T}^{(n_i)} \otimes I \otimes \cdots \otimes I,\,\, \boldsymbol{T}^{(n_i)}\in\mathrm B_{r_i} (\mathbb D^{n_i})\end{equation} 
such that $n_1 + \cdots + n_p=n$, $r_1 \cdots r_{p} = r$. Here, we use the notation 
$$ I \otimes \cdots \otimes I \otimes \boldsymbol{T}^{( n_i )} \otimes I \otimes \cdots \otimes I \!: = \!( I \otimes \cdots \otimes I \otimes T_{1 n_i}  \otimes I \otimes \cdots \otimes I, \! \cdots, \!  I \otimes \cdots \otimes I \otimes T_{n_i n_i}\otimes I \otimes \cdots \otimes I ) $$
where $ \boldsymbol{T}^{( n_i )} = ( T_{1 n_i}, \hdots, T_{ n_i n_i } ) \in \mathrm B_{r_i} (\mathbb D^{n_i}) $.
Now consider the class $ \mathcal{T} $ of $n$ - tuples of operators $ \boldsymbol{T} \in \mathrm B_r( \D^n ) $ obtained as in \eqref{tensor} and let $ \mathcal{T}_1 $ be the subset of $ \mathcal{T} $ consisting of all operator tuples $ \boldsymbol{T} =  (\hat{T}^{(n_1)}, \ldots , \hat{T}^{(n_p)})$ with $ n_i = 1 $, $ 1 \leq i \leq p $. Then unlike the case when $r=1$ or $2$, it was shown in \cite[Theorem 7.8]{HHHVB1} that there are $n$ - tuples 
in $\mathrm B_3(\D^n)$ homogeneous under $\mob^n$ are not necessarily in $ \mathcal{T}_1 $.  The specific example for $r=3$ given in that paper, however, is in the larger set $ \mathcal{T} $. Never the less, we don't know if $ \mathcal{T} $ consists of all irreducible $n$ - tuples of operators in $\mathrm B_r(\D^n)$ homogeneous under $\mob^n$. 

It is useful to also give an alternative description of some of these operators as $n$ - tuples of multiplication of operators on a reproducing kernel Hilbert space. This is the class obtained by setting $n_i=1$, $i=1, \ldots ,n$ and $r_1 = \cdots = r_{n-1}=1$ and $r_n=r$.  For $\l_1,\hdots,\l_n>0$, let $\mathcal{H}^{(\l_i)}(\D)$ be the reproducing kernel Hilbert space with the reproducing kernel $K^{(\l_i)} (z, w) = (1 - z\bar{w})^{-\l_i},\,\,z, w \in \mathbb{D}$ and $A^{(\l_n, \mu)}$ be the reproducing kernel Hilbert space determined by the positive definite kernel $B^{(\l_n, \mu)}$ as in \cite[Equation (4.3)]{HOHS}. Then the adjoint of the $n$ - tuple of multiplication operators by the co-ordinate functions on the reproducing kernel Hilbert space $\mathcal{H}^{(\l_1)} \otimes \mathcal{H}^{(\l_2)} \otimes \cdots \otimes \mathcal{H}^{(\l_{n-1})} \otimes A^{(\l_n, \mu)}$ is in $\mathrm B_r(\D^n)$, it is irreducible and  homogeneous with respect to M\"ob$^n$ and are mutually unitarily inequivalent.  

%

Here, we construct a family of irreducible $n$ - tuples of operators in $\mathrm B_r(\D^n)$  homogeneous \w $\mob^n$ that is not of the form \eqref{tensor} if all the $n_i$'s are required to be $1$. This, we hope, would be a first step in any attempt to classify homogeneous $n$ - tuples in $\mathrm B_r(\D^n)$, $r >3$.  The description of the Hilbert spaces in \cite[Section 7]{HHHVB1} prompts a natural generalisation of the $\G$ map leading to construction of $n$ - tuples of operators in $\mathrm B_r(\D^n)$, $r\geq 3$, in a manner very similar to the one in  \cite{HOHS}. 


In Section \ref{construction}, for a fixed but arbitrary $r$ - tuples of positive real numbers $\mu:=(\mu_1,\hdots,\mu_r)$, we introduce ``the $\G$ map" depending 
on $\mu$, from $\oplus_{0\leq\b\leq\a}(\otimes_{i=1}^n\H^{(\l_i+2\b_i)}(\D))$ taking values in Hol$(\D^n,\C^r)$. Here $\l=(\l_1,\hdots,\l_n)$ is a fixed but arbitrary $n$ - tuple of positive real numbers, $\a=(\a_1,\a_2,\hdots,\a_n)\in(\N\cup\{0\})^n$ such that the cardinality of the set $\L=\{\b\in(\N\cup\{0\})^n:\b\leq \a\}$ is $r$, where  $``\leq"$ denotes the graded co-lexicographic ordering on $(\N\cup\{0\})^n$. Choosing the inner product in the image of $\G$ that makes $\G$ unitary,  we see that the image of $\G$  is a reproducing kernel Hilbert space $\H^{(\l,\mu)}$. The reproducing kernel of the Hilbert space $\mathcal{H}^{(\l,\mu)}$ is computed explicitly. We then prove that the $n$ - tuple of multiplication operators $\M^{(\l,\mu)}:=(M_{z_1}, M_{z_2}, \ldots, M_{z_n})$ by the co-ordinate functions on $\mathcal{H}^{(\l,\mu)}$ is M\"ob$^n$ - homogeneous by obtaining the associated multiplier representation of $\mob^n$ explicitly. We also establish that the adjoint of the $n$ - tuple of multiplication operators $\M^{(\l,\mu)}$ is in $\mathrm B_r(\D^n)$. In Section \ref{irreducible}, we show that the tuple of multiplication operators $\M^{(\l,\mu)}$ on $\mathcal{H}^{(\l,\mu)}$ is irreducible. In the final section, we prove that $\M^{(\l,\mu)}$ and $\M^{(\l',\mu')}$ are unitarily equivalent if and only if $\l=\l'$ and $\mu=\mu'$. 

\section{Preliminaries}

Let $\D$ be the open unit disc in $\C$ and $\mob$ be the set of all bi-holomorphic automorphisms $z\mapsto e^{i\theta}\frac{z-a}{1-\ov{a}z}$, $a\in\D$, $\theta\in[0,2\pi)$ of $\D$.  Note that SU$(1,1):=\left\{\left(\begin{smallmatrix}a&b\\\ov{b}&\ov{a}\end{smallmatrix}\right):|a|^2-|b|^2=1\right\}$ is a two fold cover of $\mob$. Consequently, the universal covering space $\widetilde{\mob}$ of $\mob$ is same as that of SU$(1,1)$. Let $\D^n:=\{(z_1, \ldots , z_n) : |z_1|, \ldots , |z_n| < 1\}$ and $\mob^n:=\mob\times\cdots\times\mob$. It is known that bi-holomorphic automorphisms $\aut(\D^n)$ of $\D^n$ is the semi-direct product $\mob^n \rtimes \mathfrak S_n$, where $\mathfrak S_n$ is the permutation group on $n$ symbols. As we have said before, the group $\mob^n$ acts transitively on $\mathbb D^n$.  In this paper, we study $n$ - tuples of multiplication operators homogeneous under the group $\mob^n$. 



Let $\H_K$ be a Hilbert space of holomorphic functions on $\D^n$ taking values in $\C^r$ for some $r\in\mathbb{N}$ possessing a reproducing kernel $K$ satisfying the reproducing property: for each $z\in\D^n$, $\xi\in\C^r$ and $f\in \H_K$, 
\beq \label{rep property}\<f,K(.,z)\xi\>_{\H_K}=\<f(z),\xi\>_{\C^r}.\eeq 
Note that $K(.,z)\xi\in\H_K$ for each $z\in\D^n$ and $\xi\in\C^r$ and as is well-known, the linear span of the set $\{K(.,z)\xi:z\in\D^n,\xi\in\C^r\}$ is dense in $\H_K$. Also, by differentiating both sides of the Equation \eqref{rep property}, we have that 
\beq \label{higher order rep property} \<f,\dbar^l_iK(.,z)\xi\>_{\H_K}=\<\del^l_i f(z),\xi\>_{\C^r},~f\in\H_K,~1\leq i\leq n,~l\in\N,~\text{and}~\xi\in\C^r.\eeq

Recall from \cite{OPOSE,GBKCD} that the adjoint $\M^*$ of the $n$ - tuple of multiplication operators $\M$ by the co-ordinate functions on $\H_K$ is in the Cowen-Douglas class $\mathrm B_r(\D^n)$ over $\D^n$ if the operator $D_{\M^*} : \H_K \to \H_K \oplus \cdots \oplus \H_K$ defined by $D_{\M^*} f = \left(M_{z_1}^* f,\hdots,M_{z_n}^* f\right)$, $f \in \H_K$ satisfies the following properties:
\begin{itemize} 
\item $\dim \ker D_{\M^* - z I}=r$, $z\in \D^n$;
\item  $\text{ran} D_{\M^* - z I}$ is closed in $\H_K\oplus\cdots\oplus\H_K$;
\item $\bigvee_{\bz\in \D^n} \ker D_{\M^* - z I}$ is dense in $\mathcal{H}_K$.
\end{itemize}
In this case, there is a {\h} $E_{\M^*}:=\{(z,v):v\in\ker D_{\M^*-zI}\}\subset\D^n\times\H_K$ defined over $\D^n$ such that the unitary equivalence class of the $n$ - tuple $\M^*$ and the (local) equivalence class of the hermitian holomorphic  vector bundle $E_{\M^*}$ determine each other. Given two reproducing kernels $K_1,K_2:\D^n\ra\text{M}(r,\C)$ with the property that $\M_i^*$ on $\H_{K_i}$, $i=1,2$ are in $\mathrm B_r(\D^n)$, it follows that $M_1^*$ and $M_2^*$ are unitarily equivalent if and only if there exists a holomorphic function $\Phi:\D^n\ra\text{GL}(r,\C)$ such that $K_1(z,w)=\Phi(z)K_2(z,w)\Phi(w)^*$ which is same as the fact that the {\h}s $E_{\M_1^*}$ and $E_{\M_2^*}$ are unitarily equivalent over $\D^n$. 

If in addition, the $n$ - tuple of multiplication operators $\M=(M_{z_1},\hdots,M_{z_n})$ by the co-ordinate functions on $\H_K$ is homogeneous \w $\mob^n$ -- that is, for each $\phi=(\phi_1,\hdots,\phi_n)\in\mob^n$, $\phi(\M)=(M_{\phi_1},\hdots,M_{\phi_n})$, where  $\phi(\M)$ is defined by the usual holomorphic functional calculus, is unitarily equivalent to $\M$ -- then  the {\h} $E_{\M^*}$ is homogeneous \w $\mob^n$ as well.  A hermitian holomorphic vector bundle $\pi:E_{\M^*}\ra\D^n$ is said to be \textit{homogeneous} \w a subgroup $G$ of the group of bi-holomorphic automorphisms of $\D^n$ if for every $\phi\in G$, there is an isometric bundle automorphism $\hat{\phi}:E_{\M^*}\ra E_{\M^*}$ such that $\phi\circ\pi=\pi\circ\hat{\phi}$. In this paper, the group $G$ is fixed once for all to be the subgroup $\mob^n$ of the group of bi-holomorphic automorphisms of $\D^n$. Moreover, it follows from Theorem 4.1 in \cite{HHHVB1} that the universal covering group $\widetilde{\mob}^n$ acts on $E_{\M^*}$ uniquely by isometric bundle automorphisms. 

Also, we recall from \cite[Section 4]{HHHVB1} that the homogeneity of $\M^*\in\mathrm B_r(\D^n)$ makes the reproducing kernel $K$ \emph {quasi-invariant}, that is,  \beq \label{quasi-invariant} K(z,w)=J(\tilde{\phi},z)K(\phi(z),\phi(w))J(\tilde{\phi},w)^*,~z,w\in\D^n,~\tilde{\phi}\in\widetilde{\mob}^n,\eeq where $\phi=p(\tilde{\phi})$, $p:\widetilde{\mob}^n\ra\mob^n$ is the universal covering map and for each $\tilde{\phi}\in \widetilde{\mob}^n$,  $J(\tilde{\phi},.):\D^n\ra\text{GL}(r,\C)$ is a holomorphic mapping. Further, if the function $J$ satisfies the ``co-cycle" identity $$J(\tilde{\phi}\tilde{\psi},z)=J(\tilde{\psi},z)J(\tilde{\phi},\psi(z)),~\tilde{\phi},\tilde{\psi}\in\widetilde{\mob}^n,~\text{and}~z\in\D^n,$$ we say that $J$ is a \textit{co-cycle}. 
If $K$ is quasi-invariant and  J is a co-cycle, then it is easy to verify that the map $U:\widetilde{\mob}^n\times\H_K\ra\H_K$ defined by \beq \label{mult-rep}(U(\tilde{\phi}^{-1})f)(z):=J(\tilde{\phi},z)f(\phi(z))\eeq is a unitary representation of $\widetilde{\mob}^n$ onto $\H_K$.

\section{Construction of Hilbert spaces}\label{construction}

In this section, we construct a family of reproducing kernel Hilbert spaces consisting of holomorphic functions on $\D^n$ taking values in $\C^r$ for a given $r\in \N$.  The reproducing kernels associated to these Hilbert spaces are computed explicitly. We then prove that the $n$ - tuples of multiplication operators by the co-ordinate functions on these Hilbert spaces are bounded. Finally, the adjoint of these $n$ - tuples of multiplication operators are shown to be in $\mathrm B_r(\D^n)$.


For a given $r \in \N$, fix an element $\a=(\a_1,\hdots,\a_n)\in(\N\cup\{0\})^n$ so that the cardinality $|\L|$ of the set $\L=\{\b\in(\N\cup\{0\})^n:\b\leq \a\}$ is $r$. Here we are using the graded co-lexicographic ordering on $(\N\cup\{0\})^n$. 


For each $\b=(\b_1,\hdots,\b_n)\in\L$ and $\l=(\l_1,\hdots,\l_n)$ with each $\l_i>0$, consider the reproducing kernel Hilbert space $\H^{(\l+2\b)}:=\bigotimes_{i=1}^n\H^{(\l_i+2\b_i)}(\D)$ of holomorphic functions on $\D^n$ with the reproducing kernel 
\beq \label{rep ker1} 
K^{(\l+2\b)}(z,w)=\prod_{i=1}^n(1-z_i\ov{w}_i)^{-(\l_i+2\b_i)},~z,w\in\D^n.\eeq 
Define the mapping $\G_{\b}:\H^{(\l+2\b)}\ra\text{Hol}(\D^n,\C^r)$ for $\b\in\L$ as follows 
$$\G_{\b}(f)=\left({\gamma\choose\b}\frac{\del^{\gamma-\b}f}{(\l+2\b)_{\gamma-\b}}\right)_{0\leq \gamma\leq \a}.$$ 
Here, ${\gamma\choose\b}={\gamma_1\choose\b_1}\cdots {\gamma_n\choose\b_n}$ is the binomial symbol in multi-index and $ (\l+2\b)_{\gamma-\b} $ is the Pochhammer symbol in multi-index which by definition is $(\l_1+2\b_1)_{\gamma_1-\b_1}\cdots (\l_n+2\b_n)_{\gamma_n-\b_n} $.
Note that if $\gamma_i<\b_i$ for some $i$, then ${\gamma\choose\b}=0$. It follows from the definition that each $\G_{\b}$ is injective.  We define the mapping $\G$ on $\bigoplus_{0\leq\b\leq\a}\H^{(\l+2\b)}$ taking values in $\text{Hol}(\D^n,\C^r)$ by 
\begin{equation}\label{equation of gamma}
(f_{\b})_{0\leq \b\leq\a}\mapsto \sum_{0\leq\b\leq\a}\mu_{\b}\G_{\b}(f_{\b})
\end{equation}
 where $\mu_{\b}$ with $0\leq\b\leq\a$ are all positive real numbers and $\mu_0=1$. Note from the definition that $\G$ is an injective linear transformation.

\begin{lem}\label{lemma injective}
The map $\G$ on $\bigoplus_{0\leq\b\leq\a}\H^{(\l+2\b)}$ taking values in $\text{Hol}(\D^n,\C^r)$ defined by the equation \eqref{equation of gamma} is injective.
\end{lem}

\begin{proof}
Since $\L=\{\b\in(\N\cup\{0\})^n:\b\leq \a\}$ is a total ordered set, we write $\L = \{\beta_1, \ldots, \beta_r\}$, where $\beta_i < \beta_j$ if $1 \leq i < j \leq r$. Let $f = (f_{\beta_i})_{1 \leq i \leq r} \in \bigoplus_{0\leq\b\leq\a}\H^{(\l+2\b)}$ be such that $\G f = 0$. 

By an induction argument, we now prove that $f_{\beta_i} = 0$ for every $1 \leq i \leq r$. Since $\beta_1 < \beta_i$ for every $i \geq 2$, it follows from the definition of $\G_{\beta_i}$ that $\left(\G_{\beta_i}f_{\beta_i}\right)_{\beta_1} = 0$ for every $i \geq 2$. This implies from the definition of $\G$ that $f_{\beta_1} = 0$. 

Now assume that there exists $1 \leq j < r$ such that $f_{\beta_k} = 0$ for every $1 \leq k \leq j$. A similar argument as given in the previous paragraph implies that $f_{\beta_{j + 1}} = 0$. This proves that $f_{\beta_i} = 0$ for every $1 \leq i \leq r$. Therefore the map $\G$ is injective.
\end{proof}

Let $\H^{(\l,\mu)}$ denote the image of $\G$. Being injective, $\G$ defines an inner product on $\H^{(\l,\mu)}$ as follows 
$$\left\langle\G((f_{\b})_{0\leq \b\leq\a}),\G((g_{\b})_{0\leq \b\leq\a})\right\rangle := \<(f_{\b})_{0\leq \b\leq\a},(g_{\b})_{0\leq \b\leq\a}\>$$ 
making $\G$ a unitary transformation onto $\H^{(\l,\mu)}$. We now prove that $\H^{(\l,\mu)}$ is a reproducing kernel Hilbert space by first showing that  the image Im$(\G_{\b})$ of $\G_{\b}$ is so for each $\b\in\L$. 

\begin{prop}
Let $\b\in\L$. Then Im$(\G_{\b})$ is a reproducing kernel Hilbert space with the reproducing kernel 
$$K_{\b}(z,w)=\left(\!\!\!\left(\frac{{s\choose \b}{t\choose \b}}{(\l+2\b)_{s-\b}(\l+2\b)_{t-\b}}\del^{s-\b}\bar{\del}^{t-\b}K^{(\l+2\b)}(z,w)\right)\!\!\!\right)_{0\leq s,t\leq\a},~z,w\in\D^n$$ where $K^{(\l+2\b)}(z,w)$ is as in \eqref{rep ker1}.
\end{prop}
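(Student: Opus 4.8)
The plan is to exhibit the explicit unitary $\G_\b$ between $\H^{(\l+2\b)}$ and $\mathrm{Im}(\G_\b)$ and to transport the reproducing kernel along it. Recall that $\G_\b$ carries $f\in\H^{(\l+2\b)}$ to the $\C^r$-valued holomorphic function whose $\gamma$-th component ($0\le\gamma\le\a$) is $\binom{\gamma}{\b}(\l+2\b)_{\gamma-\b}^{-1}\del^{\gamma-\b}f$, and that by construction the inner product on $\mathrm{Im}(\G_\b)$ is defined so that $\G_\b$ is unitary. Hence a function $h\in\mathrm{Hol}(\D^n,\C^r)$ lies in $\mathrm{Im}(\G_\b)$ precisely when $h=\G_\b(f)$ for some $f\in\H^{(\l+2\b)}$, and then $\langle\G_\b(f),\G_\b(g)\rangle_{\mathrm{Im}(\G_\b)}=\langle f,g\rangle_{\H^{(\l+2\b)}}$. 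Since $\H^{(\l+2\b)}$ is a reproducing kernel Hilbert space, $\mathrm{Im}(\G_\b)$ is automatically one as well; what remains is to identify its kernel.

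First I would fix $w\in\D^n$ and $\xi\in\C^r$ and guess that the reproducing element $K_\b(\cdot,w)\xi$ equals $\G_\b(k_w^\xi)$ for a suitable $k_w^\xi\in\H^{(\l+2\b)}$, then compute $k_w^\xi$ by forcing the reproducing identity. For $h=\G_\b(f)$ we have $\langle h, K_\b(\cdot,w)\xi\rangle_{\mathrm{Im}(\G_\b)} = \langle f, k_w^\xi\rangle_{\H^{(\l+2\b)}}$, while the desired value is $\langle h(w),\xi\rangle_{\C^r}=\sum_{0\le\gamma\le\a}\binom{\gamma}{\b}(\l+2\b)_{\gamma-\b}^{-1}\langle\del^{\gamma-\b}f(w),\bar\xi_\gamma\rangle$ — wait, more carefully, $\langle h(w),\xi\rangle_{\C^r}=\sum_{0\le\gamma\le\a}\binom{\gamma}{\b}(\l+2\b)_{\gamma-\b}^{-1}(\del^{\gamma-\b}f)(w)\,\overline{\xi_\gamma}$. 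Now I would invoke the higher-order reproducing property \eqref{higher order rep property} for the scalar kernel $K^{(\l+2\b)}$, namely $(\del^l_i f)(w)=\langle f,\dbar^l_i K^{(\l+2\b)}(\cdot,w)\rangle$, iterated over the multi-index $\gamma-\b$, to write each summand as an inner product against $f$. This yields
\[
k_w^\xi(\cdot)=\sum_{0\le\gamma\le\a}\binom{\gamma}{\b}\frac{\overline{\xi_\gamma}}{(\l+2\b)_{\gamma-\b}}\,\bar\del^{\gamma-\b}K^{(\l+2\b)}(\cdot,w),
\]
where $\bar\del^{\gamma-\b}$ denotes the iterated anti-holomorphic derivative in the second slot evaluated appropriately. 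It then only remains to apply $\G_\b$ to this $k_w^\xi$: its $s$-th component is $\binom{s}{\b}(\l+2\b)_{s-\b}^{-1}\del^{s-\b}k_w^\xi$, and differentiating the displayed sum in the first variable produces exactly $\sum_{0\le\gamma\le\a}\binom{s}{\b}\binom{\gamma}{\b}\big((\l+2\b)_{s-\b}(\l+2\b)_{\gamma-\b}\big)^{-1}\del^{s-\b}\bar\del^{\gamma-\b}K^{(\l+2\b)}(\cdot,w)\,\overline{\xi_\gamma}$, which is precisely the $s$-th row of the asserted matrix kernel $K_\b(\cdot,w)$ acting on $\xi$ (with $t\leftrightarrow\gamma$). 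This proves that $K_\b(\cdot,w)\xi=\G_\b(k_w^\xi)\in\mathrm{Im}(\G_\b)$ and satisfies the reproducing property, hence is the reproducing kernel.

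The main obstacle I anticipate is purely bookkeeping: keeping the multi-index binomial coefficients $\binom{\gamma}{\b}$, the shifted Pochhammer symbols $(\l+2\b)_{\gamma-\b}=\prod_{i=1}^n(\l_i+2\b_i)_{\gamma_i-\b_i}$, and the mixed derivatives $\del^{s-\b}\bar\del^{t-\b}$ consistently aligned while iterating \eqref{higher order rep property} coordinate by coordinate, and making sure the convention $\binom{\gamma}{\b}=0$ when $\gamma_i<\b_i$ for some $i$ correctly truncates the sum to indices $\gamma\ge\b$ (so that no negative-order derivative ever appears). One should also note that $k_w^\xi$ genuinely lies in $\H^{(\l+2\b)}$ because each $\bar\del^{\gamma-\b}K^{(\l+2\b)}(\cdot,w)$ does, by \eqref{higher order rep property}, and finite linear combinations stay in the space; and that the argument does not need positivity of $K_\b$ verified by hand — it is inherited from the unitarity of $\G_\b$ together with the fact that $K^{(\l+2\b)}$ is a genuine reproducing kernel. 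Finally, density of $\{K_\b(\cdot,w)\xi\}$ follows from density of $\{K^{(\l+2\b)}(\cdot,w)\}$ transported through the unitary $\G_\b$, so no extra work is required there.
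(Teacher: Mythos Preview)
Your approach is essentially identical to the paper's: both recognize that $K_\b(\cdot,w)\xi$ must be $\G_\b$ applied to a suitable linear combination of the anti-holomorphic derivatives $\bar\del^{\gamma-\b}K^{(\l+2\b)}(\cdot,w)$, and both verify the reproducing identity via unitarity of $\G_\b$ together with the higher-order reproducing property \eqref{higher order rep property}. The paper streamlines the computation by testing only against standard basis vectors $e_\theta$ rather than a general $\xi$, which sidesteps the conjugate-linearity bookkeeping; in your version there is a small slip --- since the inner product is conjugate-linear in the second slot, the coefficient of $\bar\del^{\gamma-\b}K^{(\l+2\b)}(\cdot,w)$ in $k_w^\xi$ should be $\xi_\gamma$ rather than $\overline{\xi_\gamma}$ (otherwise you end up with $K_\b(\cdot,w)\bar\xi$ instead of $K_\b(\cdot,w)\xi$), but this is purely cosmetic and does not affect the argument.
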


\begin{proof}
Let $\{e_{\theta}\in\C^r:\theta\in\L\}$ be the standard ordered basis for $\C^r$. Note that for $f_{\b}\in\H^{(\l+2\b)}$,
\begin{eqnarray*}
\<\G_{\b}f_{\b},K_{\b}(.,w)e_{\theta}\> &=& \left<\G_{\b}f_{\b},\G_{\b}\frac{{\theta \choose \b}}{(\l+2\b)_{\theta-\b}}\dbar^{\theta-\b}K^{(\l+2\b)}(.,w)\right>\\
&=& \frac{{\theta\choose\b}}{(\l+2\b)_{\theta-\b}}\del^{\theta-\b}f_{\b}(w)\\
&=& \<\G_{\b}f_{\b}(w),e_{\theta}\>
\end{eqnarray*}
completing the proof.
\end{proof}

\begin{cor}
$\H^{(\l,\mu)}$ is a reproducing kernel Hilbert space with the reproducing kernel \beq\label{rep-ker}K^{(\l,\mu)}(z,w)=\sum_{0\leq\b\leq\a}\mu_{\b}^2K_{\b}(z,w),~z,w\in\D^n.\eeq
\end{cor}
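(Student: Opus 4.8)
The plan is to verify directly that the matrix-valued function $K^{(\l,\mu)}$ of \eqref{rep-ker} is the reproducing kernel of $\H^{(\l,\mu)}$. Since $\G$ is a unitary transformation of $\bigoplus_{0\leq\b\leq\a}\H^{(\l+2\b)}$ onto $\H^{(\l,\mu)}$, the space $\H^{(\l,\mu)}$ is already a (complete) inner product space of $\C^r$-valued holomorphic functions on $\D^n$, and $\G$ being injective as a map into $\mathrm{Hol}(\D^n,\C^r)$ means its elements are genuine functions. So it remains only to check two things: (a) $K^{(\l,\mu)}(\cdot,w)e_\theta\in\H^{(\l,\mu)}$ for every $w\in\D^n$ and every standard basis vector $e_\theta$, $\theta\in\L$; and (b) $\big\langle F, K^{(\l,\mu)}(\cdot,w)e_\theta\big\rangle_{\H^{(\l,\mu)}}=\big\langle F(w),e_\theta\big\rangle_{\C^r}$ for all $F\in\H^{(\l,\mu)}$. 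Properties (a) and (b) are precisely the defining properties of the reproducing kernel, so they identify $K^{(\l,\mu)}$ as claimed.

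For (a), I would first record, exactly as in the computation in the proof of the preceding Proposition, that $K_\b(\cdot,w)e_\theta=\G_\b\big(h^{(w,\theta)}_\b\big)$ with $h^{(w,\theta)}_\b:=\dfrac{{\theta\choose\b}}{(\l+2\b)_{\theta-\b}}\,\dbar^{\theta-\b}K^{(\l+2\b)}(\cdot,w)\in\H^{(\l+2\b)}$, with the convention that this vanishes unless $\b\leq\theta$. Combining this with \eqref{rep-ker} and the definition of $\G$ gives
$$K^{(\l,\mu)}(\cdot,w)e_\theta=\sum_{0\leq\b\leq\a}\mu_\b^2\,\G_\b\big(h^{(w,\theta)}_\b\big)=\G\Big(\big(\mu_\b\,h^{(w,\theta)}_\b\big)_{0\leq\b\leq\a}\Big)\in\mathrm{Im}(\G)=\H^{(\l,\mu)},$$
which settles (a) and also identifies the preimage of $K^{(\l,\mu)}(\cdot,w)e_\theta$ under $\G$.

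For (b), write an arbitrary $F\in\H^{(\l,\mu)}$ as $F=\G\big((f_\b)_{0\leq\b\leq\a}\big)$. Since the inner product on $\H^{(\l,\mu)}$ is defined so that $\G$ is unitary and the inner product on the direct sum is the orthogonal one, the previous paragraph gives $\big\langle F, K^{(\l,\mu)}(\cdot,w)e_\theta\big\rangle_{\H^{(\l,\mu)}}=\sum_{0\leq\b\leq\a}\mu_\b\,\big\langle f_\b, h^{(w,\theta)}_\b\big\rangle_{\H^{(\l+2\b)}}$. Now I would apply the higher order reproducing property \eqref{higher order rep property}, one variable at a time in the factorisation $\H^{(\l+2\b)}=\bigotimes_{i=1}^n\H^{(\l_i+2\b_i)}(\D)$ (whose reproducing kernel is the product $\prod_i K^{(\l_i+2\b_i)}$ and on which $\dbar^{\theta-\b}$ acts as $\bigotimes_i\dbar_i^{\theta_i-\b_i}$), to obtain $\big\langle f_\b,\dbar^{\theta-\b}K^{(\l+2\b)}(\cdot,w)\big\rangle=\del^{\theta-\b}f_\b(w)$. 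Hence
$$\big\langle F, K^{(\l,\mu)}(\cdot,w)e_\theta\big\rangle_{\H^{(\l,\mu)}}=\sum_{0\leq\b\leq\a}\mu_\b\,\frac{{\theta\choose\b}}{(\l+2\b)_{\theta-\b}}\,\del^{\theta-\b}f_\b(w).$$
On the other hand, by the definition of $\G_\b$ the $\theta$-th coordinate of $\G_\b(f_\b)(w)$ equals ${\theta\choose\b}(\l+2\b)_{\theta-\b}^{-1}\del^{\theta-\b}f_\b(w)$, so $\big\langle F(w),e_\theta\big\rangle_{\C^r}=\big\langle\sum_\b\mu_\b\G_\b(f_\b)(w),e_\theta\big\rangle$ is the same sum; this proves (b).

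The argument is essentially formal. The only input beyond bookkeeping is the tensor-product reproducing identity $\big\langle f_\b,\dbar^{\theta-\b}K^{(\l+2\b)}(\cdot,w)\big\rangle=\del^{\theta-\b}f_\b(w)$, which follows by iterating \eqref{higher order rep property} across the $n$ factors; I do not anticipate any genuine obstacle there. The point of the computation is simply that the Pochhammer normalisations in the definition of $\G_\b$ were chosen so that pairing against $\dbar^{\theta-\b}K^{(\l+2\b)}(\cdot,w)$ reproduces exactly the coordinates of $\G_\b(f_\b)$, while passing through the unitary $\G$ converts the weights $\mu_\b^2$ appearing in \eqref{rep-ker} into the weights $\mu_\b$ that appear on both sides of (b), so the two sides match term by term.
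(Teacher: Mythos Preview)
Your proof is correct and follows essentially the same approach as the paper. The paper states the corollary without an explicit proof, treating it as an immediate consequence of the preceding proposition; your argument simply spells out that consequence by directly verifying the reproducing property in $\H^{(\l,\mu)}$, using exactly the identification $K_\b(\cdot,w)e_\theta=\G_\b\big(h^{(w,\theta)}_\b\big)$ established there together with the unitarity of $\G$.
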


At this point, we remark that the Hilbert spaces $\H^{(\l, \mu)}$ and $\bigoplus_{0\leq\b\leq\a}\H^{(\l+2\b)}$ differ by their Hilbert space structures since both of them represent the same set as demonstrated in the following lemma.

\begin{lem}\label{lemma equality}
The Hilbert spaces $\H^{(\l, \mu)}$ and $\bigoplus_{0\leq\b\leq\a}\H^{(\l+2\b)}$ are equal as a set.
\end{lem}

\begin{proof}
First, we claim that for each $f \in \H^{(\l + 2\beta)}$ and $0 \leq \gamma \leq \alpha$, $\left(\G_{\beta} f \right)_\g \in \H^{(\l + 2\g)}$. Writing $f = \otimes_{i = 1}^n f_i$ with $f_i \in \H^{(\l_i + 2\beta_i)}(\D)$, $ 1 \leq i \leq n$, we observe that $\partial_i^{\gamma_i - \beta_i}f_i \in \H^{(\l_i + 2\gamma_i)}(\D)$ whenever $\gamma_i - \beta_i \geq 0$ for every $ 1 \leq i \leq n$. Here, we use the fact that for $0 < \mu < \infty$ and $g \in \H^{(\mu)}(\D)$, $g' \in \H^{(\mu + 2)}(\D)$, where $\H^{(\mu)}(\D)$ is the reproducing kernel Hilbert space with the reproducing kernel $(1 - z\bar{w})^{ - \mu}$, $z, w \in \mathbb{D}$. Thus it yields that $\left(\G_{\beta}f\right)_{\gamma} \in \H^{(\l + 2\gamma)}$ verifying our claim since $\{f = \otimes_{i = 1}^n f_i : f_i \in \H^{(\l_i + 2\beta_i)}(\D), 1 \leq i \leq n\}$ is a total set in $\H^{(\l + 2\beta)}$.

Let $g \in \H^{(\l, \mu)}$ be an arbitrary element. Then there exists $f = (f_\beta)_{0 \leq \beta \leq \alpha} \in \bigoplus_{0\leq\b\leq\a}\H^{(\l+2\b)}$ such that $g = \G f$. From the definition of the $\G$ and the claim above, it follows that $\G f \in \bigoplus_{0\leq\b\leq\a}\H^{(\l+2\b)}$ verifying that $\H^{(\l, \mu)} \subseteq \bigoplus_{0\leq\b\leq\a}\H^{(\l+2\b)}$.

For the converse inclusion $\bigoplus_{0\leq\b\leq\a}\H^{(\l+2\b)} \subseteq \H^{(\l, \mu)}$, we first write $\L = \{\beta_1, \ldots, \beta_r\}$, where $\beta_i < \beta_j$ if $1 \leq i < j \leq r$ ($r$ is the cardinality of the set $\L$) since $\L=\{\b\in(\N\cup\{0\})^n:\b\leq \a\}$ is a total ordered set. From the definition of $\G$, note that $\{0\} \oplus \cdots \oplus \{0\} \oplus \H^{(\l + 2\beta_r)} \subseteq \H^{(\l, \mu)}$. Now we show that $\{0\} \oplus \cdots \oplus \{0\} \oplus \H^{(\l + \beta_{r-1})} \oplus \{0\}$ is contained in $\H^{(\l, \mu)}$. Let $f_{\beta_{r-1}} \in \H^{(\l + 2\beta_{r-1})}$ and consider the element $f = (f_{\beta_i})_{1 \leq i \leq r}$ where $f_{\beta_i} = 0$ for $ i \neq r-1$. Then we have that $\G f = \mu_{\beta_{r-1}} \G_{\beta_{r-1}} f_{\beta_{r-1}} \in \H^{(\l,\mu)}$. Furthermore, it follows from the claim above that $\left( \G_{\beta_{r-1}} f_{\beta_{r-1}}\right)_{\beta_r} \in \H^{(\l + 2\beta_r)}$. Consequently, we have that 
$$(0, \ldots , 0, f_{\beta_{r-1}}, 0) = \G f - \mu_{\beta_{r-1}}(0, \ldots, 0, \left( \G_{\beta_{r-1}} f_{\beta_{r-1}}\right)_{\beta_r}) \in \H^{(\l, \mu)} $$ 
since $\{0\} \oplus \cdots \oplus \{0\} \oplus \H^{(\l + 2\beta_r)} \subseteq \H^{(\l, \mu)}$ and $\left( \G_{\beta_{r-1}} f_{\beta_{r-1}}\right)_{\beta_{r-1}} = f_{\beta_{r-1}}$. This proves that $ \{0\} \oplus \cdots \oplus \{0\} \oplus \H^{(\l + 2\beta_{r-1})} \oplus \{0\} \subseteq \H^{(\l, \mu)}$. Finally, a similar argument with the help of mathematical induction yields that $\{0\} \oplus \cdots \oplus \{0\} \oplus \H^{(\l + 2\beta_j)} \oplus \{0\} \oplus \cdots \oplus \{0\} \subseteq \H^{(\l, \mu)}$ for every $1 \leq j \leq r$ which completes the proof.
\end{proof}

\begin{rem}
From the Lemma \ref{lemma equality} it follows that the multiplication operators by the coordinate functions on $\H^{(\l, \mu)}$ are well-defined (consequently, they are bounded). However, another proof of the boundedness of these operators are given in the Theorem \ref{boundedness}. 
\end{rem}

Next, we prove that the reproducing kernel $K^{(\l,\mu)}$ is quasi-invariant \w $\widetilde{\mob}^n$, that is, $K^{(\l,\mu)}$ satisfies the equation \eqref{quasi-invariant} for some co-cycle $J:\widetilde{\mob}^n\times\D^n\ra\text{GL}(r,\C)$. This can be achieved by producing an unitary multiplier representation of $\widetilde{\mob}^n$ onto $\H^{(\l,\mu)}$ defined by the equation \eqref{mult-rep} with $J$ as the multiplier. 

In the case of reproducing kernel Hilbert spaces of scalar valued functions on $\D$ (that is, for $n=r=1$), the unitary multiplier representations of $\mob$ are well known. Since these are the building blocks of the Hilbert spaces of our interest, we first describe them. These are the elements of holomorphic discrete series representation $D_{\l}^+$ of $\mob$ acting on the weighted Bergman space $\H^{(\l)}(\D)$ possessing a reproducing kernel $B^{(\l)}(z,w)=(1-z\ov{w})^{-\l}$ for $z,w\in \D$ and $\l>0$.  

For $g\in \mob$ and $t\in\R_{>0}$, taking the principal branch of power function we can uniquely define $g'(z)^{t}$ as a holomorphic function on $\D$. Now for $\tilde{g}\in\widetilde{\mob}$ with $p(\tilde{g})=g\in\mob$ ($p$ is the universal covering map), the multiplier $j_{t}(\tilde{g},z)=g'(z)^{t}$ defines on $\H^{(t)}(\D)$ the unitary representation $D^{(t)}_+$ as follows:
$$D^{(t)}_+(\tilde{g}^{-1})(f):=(g')^{\frac{t}{2}}(f\circ g), ~~~f\in\H^{(t)}(\D),~\tilde{g}\in\widetilde{\mob}.$$ As a consequence, for $f_{\b}\in\H^{(\l+2\b)}$ and $\tilde{\phi}\in\widetilde{\mob}^n$,  $$D_+^{(\l+2\b)}(\tilde{\phi}^{-1})(f_{\b})=(\phi')^{\frac{\l+2\b}{2}}(f_{\b}\circ\phi)=\prod_{i=1}^n(\phi_i')^{\frac{\l_i+2\b_i}{2}}(f_{\b_i}\circ\phi_i)$$ defines a unitary multiplier representation of $\widetilde{\mob}^n$ onto $\H^{(\l+2\b)}$ for each $\b\in\L$. Therefore, the direct sum of these representations can be transferred to $\H^{(\l,\mu)}$ by the map $\G$. We show that this is a multiplier representation. 

We need a relation between $g''(z)$ and $g'(z)$, for $g\in \mob$, in what follows. Note that for $g\in\mob$,
\beq\label{eqn9}
g''(z)=-2c_gg'(z)^{\frac{3}{2}}
\eeq 
where $c_g$ is independent of $z$ and the meaning of $g'(z)^{\frac{3}{2}}$ is as defined above. 

\begin{prop}\label{Theorem on mult-rep}
For $\l=(\l_1,\hdots,\l_n)$ with each $\l_i>0$, the image of $\bigoplus_{0\leq\b\leq\a}D_+^{(\l+2\b)}$ under the mapping $\G$ is a multiplier representation of $\widetilde{\mob}^n$ onto $\H^{(\l,\mu)}$ with the multiplier \beq\label{cocycle}
J(\tilde{\phi},z)_{\theta\eta} =
\left\{
	\begin{array}{lll}
		 {\theta\choose\eta}(-c_{\phi})^{\theta-\eta}(\phi')^{\frac{\l+\theta+\eta}{2}}(z) & \mbox{if}~~ 0\leq\eta_i\leq\theta_i,~i=1,\hdots,n \\
		0 & \mbox{otherwise}.
	\end{array}
\right.
\eeq
\end{prop}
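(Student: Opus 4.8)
The plan is to show that the transferred representation $U(\tilde\phi) := \G \circ \bigl(\bigoplus_{0\leq\b\leq\a} D_+^{(\l+2\b)}(\tilde\phi)\bigr) \circ \G^{-1}$ on $\H^{(\l,\mu)}$ is implemented by the multiplier $J$ in \eqref{cocycle} via the formula \eqref{mult-rep}. Since $\G$ is unitary and each $D_+^{(\l+2\b)}$ is a unitary representation of $\widetilde{\mob}^n$, the operator $U(\tilde\phi)$ is automatically a unitary representation; the entire content is the identification of the explicit form of $U$. Concretely, I will fix $\tilde\phi\in\widetilde{\mob}^n$, an arbitrary tuple $(f_\b)_{0\leq\b\leq\a}$, put $F = \G((f_\b)) = \sum_\b \mu_\b \G_\b(f_\b)$, and compute the $\theta$-th component of $\bigl(U(\tilde\phi^{-1})F\bigr)(z)$. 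By definition of $\G$ and $\G_\b$, this component is $\sum_{\b\leq\theta}\mu_\b \binom{\theta}{\b}\frac{1}{(\l+2\b)_{\theta-\b}}\,\del^{\theta-\b}\bigl(D_+^{(\l+2\b)}(\tilde\phi^{-1})f_\b\bigr)(z)$, and $D_+^{(\l+2\b)}(\tilde\phi^{-1})f_\b = \prod_i (\phi_i')^{(\l_i+2\b_i)/2}\,(f_\b\circ\phi)$. On the other hand, the claimed expression is $\bigl(J(\tilde\phi,z)\, F(\phi(z))\bigr)_\theta = \sum_{\eta\leq\theta}\binom{\theta}{\eta}(-c_\phi)^{\theta-\eta}(\phi')^{(\l+\theta+\eta)/2}(z)\cdot \bigl(\sum_{\b\leq\eta}\mu_\b\binom{\eta}{\b}\frac{1}{(\l+2\b)_{\eta-\b}}\del^{\eta-\b}f_\b(\phi(z))\bigr)$. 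So the goal reduces to an identity matching these two double sums, for each fixed $\b$.

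The heart of the matter is therefore a chain-rule computation: expressing $\del^{\theta-\b}\bigl((\phi')^{(\l+2\b)/2}(f_\b\circ\phi)\bigr)(z)$ in terms of the derivatives $(\del^{\eta-\b}f_\b)(\phi(z))$ for $\b\leq\eta\leq\theta$. Because $\phi = (\phi_1,\dots,\phi_n)$ acts coordinatewise and the kernel and $\G_\b$ factor as tensor products, this reduces to the one-variable statement: for $g\in\mob$, $t>0$, and nonnegative integers $m$,
\[
\del^m\bigl((g')^{t/2}(h\circ g)\bigr)(z) = \sum_{k=0}^m \binom{m}{k}\,C_{m,k}^{(t)}\,(-c_g)^{m-k}\,(g')^{(t + m + k)/2}(z)\;(\del^k h)(g(z)),
\]
for suitable combinatorial constants $C_{m,k}^{(t)}$, which one proves by induction on $m$ using $g''(z) = -2c_g\,g'(z)^{3/2}$ from \eqref{eqn9} (note each differentiation either hits $h\circ g$, contributing a factor $g'$ and raising the order of $h$-derivative, or hits a power of $g'$, contributing $g''/g' = -2c_g (g')^{1/2}$). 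Matching the power of $(g')$ on the two sides of the desired equality — on the transferred side one gets $(\phi')^{(\l+2\b)/2}\cdot(\phi')^{\#\{\text{differentiations hitting } g\}}$ — forces the exponent $(\l+2\b+(\theta-\b)+(\eta-\b))/2 = (\l+\theta+\eta)/2$, which is exactly the exponent appearing in \eqref{cocycle}, and the power of $(-c_\phi)$ comes out as $\theta-\eta$. The remaining task is to check that the scalar coefficients $C_{m,k}^{(t)}$ produced by the induction, when combined with the Pochhammer denominators $(\l+2\b)_{\theta-\b}$ on one side and $(\l+2\b)_{\eta-\b}$ on the other, collapse to the simple binomial $\binom{\theta}{\eta}$ in \eqref{cocycle}; this is a Vandermonde/Chu–Vandermonde-type summation identity on Pochhammer symbols.

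Once the multiplier identity is verified, it remains to observe that $J$ is genuinely a cocycle in the sense of the excerpt, i.e. $J(\tilde\phi\tilde\psi,z) = J(\tilde\psi,z)J(\tilde\phi,\psi(z))$; this is forced automatically because $U$ is a representation and \eqref{mult-rep} holds, but it can also be checked directly from \eqref{cocycle} using the cocycle property of $(g')^t$ (namely $(\phi\circ\psi)'(z)^t = \psi'(z)^t\,\phi'(\psi(z))^t$) and the transformation rule for $c_\phi$ under composition, which follows from \eqref{eqn9}. I expect the main obstacle to be the bookkeeping in the inductive chain-rule identity above and, in particular, the Pochhammer-symbol summation that reduces the accumulated constants to $\binom{\theta}{\eta}$; everything else (unitarity, the reduction to one variable, the structure of $\G$) is formal given the results already established in the excerpt. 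A convenient way to organize the computation is to first treat the case $r=1$, $n=1$, $\b=0$ — where $\H^{(\l,\mu)} = \H^{(\l)}(\D)$ and $J(\tilde\phi,z) = (\phi')^{\l/2}(z)$ is the classical discrete-series multiplier — and then bootstrap to general $\b$ by noting that $\G_\b$ intertwines $D_+^{(\l+2\b)}$ with the restriction of the $\b$-shifted construction, and finally take tensor products over the $n$ coordinates.
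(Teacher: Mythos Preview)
Your approach is essentially the same as the paper's: reduce to the one-variable chain-rule identity
\[
\del^m\bigl((g')^{t/2}(h\circ g)\bigr)=\sum_{k=0}^m\binom{m}{k}(t+k)_{m-k}(-c_g)^{m-k}(g')^{(t+m+k)/2}\,(h^{(k)}\circ g),
\]
applied coordinatewise (the paper simply cites this from \cite[Lemma~3.1]{HOHS} rather than redoing the induction), and then match coefficients. One correction to your expectations: the coefficient matching is \emph{not} a Chu--Vandermonde summation but a term-by-term identification. After the index shift $s=\b+k$ and the elementary binomial identity $\binom{\g}{\b}\binom{\g-\b}{s-\b}=\binom{\g}{s}\binom{s}{\b}$, the Pochhammer factors line up via the factorisation $(\l+2\b)_{s-\b}\,(\l+\b+s)_{\g-s}=(\l+2\b)_{\g-\b}$, so no further summation is required. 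Also, it is cleaner (and this is what the paper does) to verify $\G_\b\bigl(D_+^{(\l+2\b)}(\tilde\phi^{-1})f_\b\bigr)=J(\tilde\phi,\cdot)\,\G_\b(f_\b)$ for each $\b$ separately rather than carrying the double sum over $\b$ and $\eta$; since $J$ does not depend on $\b$, the full statement follows by linearity.
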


\begin{proof}
We begin with the observation that it is enough to show, for any elementary tensors $f_{\b}=f_{\b_1}\otimes\cdots\otimes f_{\b_n}\in\H^{(\l+2\b)}$ with $f_{\b_i}\in\H^{(\l_i+2\b_i)}(\D)$, $i=1,\hdots,n$, the following identity: $$\G_{\b}(D_+^{(\l+2\b)}(\tilde{\phi}^{-1})f_{\b})=J(\tilde{\phi},.)\G_{\b}(f_{\b}),~\b \in \L,~\text{and}~\tilde{\phi}\in\widetilde{\mob}^n.$$ Now using the definition of $\G_{\b}$ note that 
\beq\label{eqn1}
\G_{\b}(D_+^{(\l+2\b)}(\tilde{\phi}^{-1})f_{\b})=\left({\g\choose\b}\frac{\del^{\g-\b}((\phi')^{\frac{\l+2\b}{2}}f_{\b}\circ\phi)}{(\l+2\b)_{\g-\b}}\right)_{0\leq \g\leq \a}\eeq
where $$\del^{\g-\b}((\phi')^{\frac{\l+2\b}{2}}f_{\b}\circ\phi)=\prod_{i=1}^n\del_i^{\g_i-\b_i}\left((\phi_i')^{\frac{\l_i+2\b_i}{2}}f_{\b_i}\circ\phi_i\right).$$ For $\g\in\L$ with $\g_i<\b_i$ for some $1\leq i\leq n$, since ${\g_i\choose\b_i}=0$ it is seen that $(\G_{\b}(f))_{\g}=0$.  Now using the formula (cf. \cite[Lemma 3.1]{HOHS})
$$\del^k((g')^lf\circ g)=\sum_{i=0}^k{k\choose i}(2l+i)_{k-i}(-c_g)^{k-i}(g')^{l+\frac{k+i}{2}}(f^{(i)}\circ g)$$ 
where $g''=-2c_g(g')^{\frac{3}{2}}$ for $g\in\mob$, we have that
\begin{align*}
&\prod_{i=1}^n\del_i^{\g_i-\b_i}\left((\phi_i')^{\frac{\l_i+2\b_i}{2}}f_{\b_i}\circ\phi_i\right)\\
&=\prod_{i=1}^n\sum_{t_i=0}^{\g_i-\b_i}{{\g_i-\b_i}\choose t_i}(\l_i+2\b_i+t_i)_{\g_i-\b_i-t_i}(-c_{\phi_i})^{\g_i-\b_i-t_i}\\
&\times\left((\del_i^{t_i}f_{\b_i})\circ\phi_i\right)(\phi'_i)^{\frac{\l_i+\g_i+\b_i+t_i}{2}}\\
&=\prod_{i=1}^n\sum_{t_i+\b_i=\b_i}^{\g_i}{{\g_i-\b_i}\choose {t_i+\b_i-\b_i}}(\l_i+\b_i+t_i+\b_i)_{\g_i-(t_i+\b_i)}(-c_{\phi_i})^{\g_i-(t_i+\b_i)}\\
&\times \left((\del_i^{t_i+\b_i-\b_i}f_{\b_i})\circ\phi_i\right)(\phi'_i)^{\frac{\l_i+\g_i+(t_i+\b_i)}{2}}\\
&=\prod_{i=1}^n\sum_{s_i=\b_i}^{\g_i}\!\!{{\g_i-\b_i}\choose {s_i-\b_i}}(\l_i+\b_i+s_i)_{\g_i-s_i}(-c_{\phi_i})^{\g_i-s_i}\left(\!\!(\del_i^{s_i-\b_i}f_{\b_i})\circ\phi_i\!\right)\!\!(\phi'_i)^{\frac{\l_i+\g_i+s_i}{2}}\\
&=\sum_{s_i=\b_i}^{\g_i}\prod_{i=1}^n{{\g_i-\b_i}\choose {s_i-\b_i}}(\l_i+\b_i+s_i)_{\g_i-s_i}(-c_{\phi_i})^{\g_i-s_i}\left((\del_i^{s_i-\b_i}f_{\b_i})\circ\phi_i\right)(\phi'_i)^{\frac{\l_i+\g_i+s_i}{2}}\\
&= \sum_{s=\b}^{\g}{{\g-\b}\choose {s-\b}}(\l+\b+s)_{\g-s}(-c_{\phi})^{\g-s}(\phi')^{\frac{\l+\g+s}{2}}((\del^{s-\b}f_{\b})\circ\phi).
\end{align*} 
Here $s$ denotes the tuple $(s_1,\hdots,s_n)$. Thus this computation together with \eqref{eqn1} yield that for $0\leq\b\leq\a$,
\begin{align*}
&(\G_{\b}(D_+^{(\l+2\b)}(\tilde{\phi}^{-1})f_{\b}))_{\g}\\
&=\frac{{\g\choose\b}}{(\l+2\b)_{\g-\b}}\sum_{s=\b}^{\g}{{\g-\b}\choose{s-\b}}(\l+\b+s)_{\g-s}(-c_{\phi})^{\g-s}(\phi')^{\frac{\l+\g+s}{2}}(\del^{s-\b}f_{\b})\circ\phi\\
&=\frac{1}{(\l+2\b)_{\g-\b}}\sum_{s=\b}^{\g}{\g\choose s}{s\choose\b}(\l+\b+s)_{\g-s}(-c_{\phi})^{\g-s}(\phi')^{\frac{\l+\g+s}{2}}(\del^{s-\b}f_{\b})\circ\phi
\end{align*}
where the last equality holds since ${\g\choose\b}{{\g-\b}\choose{s-\b}}={\g\choose s}{s\choose\b}$. Consequently, we have that 
\beq
\G_{\b}(D_+^{(\l+2\b)}(\tilde{\phi}^{-1})f_{\b})(z) =J(\tilde{\phi},z)\G_{\b}(f_{\b})
\eeq
where $J(\tilde{\phi},z)_{\theta\eta}$ is as given in \eqref{cocycle}. 
\end{proof}

\begin{thm}
Let $K^{(\l,\mu)}$ be the reproducing kernel of the Hilbert space $\H^{(\l,\mu)}$. Then 
\beq\label{quasi-inv}
K^{(\l,\mu)}(z,w) = J(\tilde{\phi},z)K^{(\l,\mu)}(\phi(z),\phi(w))J(\tilde{\phi},w)^*,~\tilde{\phi}\in\widetilde{\mob}^n,~z,w\in\D^n,
\eeq
where $J$ is the co-cycle defined by the equation \eqref{cocycle}.
\end{thm}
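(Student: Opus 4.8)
The plan is to deduce the quasi-invariance \eqref{quasi-inv} from Proposition~\ref{Theorem on mult-rep} by the standard reproducing-kernel argument, which is the converse of the implication recalled just after \eqref{mult-rep}.

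\emph{Step 1: transfer unitarity to $\H^{(\l,\mu)}$.} Since each holomorphic discrete series representation $D^{(t)}_+$ is unitary, the external direct sum $\bigoplus_{0\leq\b\leq\a}D^{(\l+2\b)}_+$ is a unitary representation of $\widetilde{\mob}^n$ on $\bigoplus_{0\leq\b\leq\a}\H^{(\l+2\b)}$. As $\G$ is unitary onto $\H^{(\l,\mu)}$ and, by Proposition~\ref{Theorem on mult-rep}, conjugation of this direct sum by $\G$ is precisely the map $U$ with $(U(\tilde\phi^{-1})F)(z)=J(\tilde\phi,z)F(\phi(z))$ (here $\phi=p(\tilde\phi)$), it follows that this $U$ is a unitary representation of $\widetilde{\mob}^n$ on $\H^{(\l,\mu)}$. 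In particular the identity $U(\tilde\psi^{-1})U(\tilde\phi^{-1})=U((\tilde\phi\tilde\psi)^{-1})$ translates into the co-cycle identity $J(\tilde\phi\tilde\psi,z)=J(\tilde\psi,z)J(\tilde\phi,\psi(z))$ for the $J$ of \eqref{cocycle} (this is what makes the phrase ``the co-cycle defined by \eqref{cocycle}'' legitimate), and evaluating the co-cycle identity at $\tilde\psi=\tilde\phi^{-1}$ gives $J(\tilde\phi^{-1},\phi(w))=J(\tilde\phi,w)^{-1}$, which will be used below.

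\emph{Step 2: act on kernel sections.} Fix $\tilde\phi\in\widetilde{\mob}^n$ and write $\phi=p(\tilde\phi)$, $\phi^{-1}=p(\tilde\phi^{-1})$. For $F\in\H^{(\l,\mu)}$, $w\in\D^n$ and $\xi\in\C^r$, using that $U$ is unitary (so $U(\tilde\phi^{-1})^*=U(\tilde\phi)$), the explicit form of $U(\tilde\phi)$, and two applications of the reproducing identity \eqref{rep property}, one checks that $\langle F,\,U(\tilde\phi^{-1})(K^{(\l,\mu)}(\cdot,w)\xi)\rangle$ equals in turn $\langle U(\tilde\phi)F,\,K^{(\l,\mu)}(\cdot,w)\xi\rangle$, then $\langle F(\phi^{-1}(w)),\,J(\tilde\phi^{-1},w)^*\xi\rangle$, and finally $\langle F,\,K^{(\l,\mu)}(\cdot,\phi^{-1}(w))J(\tilde\phi^{-1},w)^*\xi\rangle$. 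Since $F$ is arbitrary, this gives $U(\tilde\phi^{-1})(K^{(\l,\mu)}(\cdot,w)\xi)=K^{(\l,\mu)}(\cdot,\phi^{-1}(w))J(\tilde\phi^{-1},w)^*\xi$.

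\emph{Step 3: evaluate and cancel.} Evaluating the last identity at a point $z\in\D^n$ and using $(U(\tilde\phi^{-1})G)(z)=J(\tilde\phi,z)G(\phi(z))$ on the left gives $J(\tilde\phi,z)K^{(\l,\mu)}(\phi(z),w)\xi=K^{(\l,\mu)}(z,\phi^{-1}(w))J(\tilde\phi^{-1},w)^*\xi$. Replacing $w$ by $\phi(w)$, inserting $J(\tilde\phi^{-1},\phi(w))^*=(J(\tilde\phi,w)^*)^{-1}$ from Step 1, and finally using that $J(\tilde\phi,w)\in\text{GL}(r,\C)$ to replace $\xi$ by $J(\tilde\phi,w)^*\xi$ and let $\xi$ range over $\C^r$, we arrive at $K^{(\l,\mu)}(z,w)=J(\tilde\phi,z)K^{(\l,\mu)}(\phi(z),\phi(w))J(\tilde\phi,w)^*$, which is \eqref{quasi-inv}.

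I do not anticipate a real obstacle: the substantive work is already done in Proposition~\ref{Theorem on mult-rep}, and the remainder is bookkeeping — the only points needing care are keeping straight whether it is $\tilde\phi$ or $\tilde\phi^{-1}$ and which slot of $K^{(\l,\mu)}$ is being acted on, and using that $J$ is $\text{GL}(r,\C)$-valued so that the final cancellation is valid. (Alternatively, one can avoid appealing to the ``converse'' direction of the dictionary and instead argue subspace by subspace: the displayed computation inside the proof of Proposition~\ref{Theorem on mult-rep} shows that each $\mathrm{Im}(\G_\b)$ is $U$-invariant with unitary restriction, hence each $K_\b$ is quasi-invariant with the same co-cycle $J$, and \eqref{quasi-inv} then follows by summing over $\b$ using \eqref{rep-ker}.)
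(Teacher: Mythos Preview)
Your argument is correct and follows essentially the same route as the paper's proof: both obtain the unitary multiplier representation $U(\tilde\phi^{-1})F=J(\tilde\phi,\cdot)\,F\circ\phi$ on $\H^{(\l,\mu)}$ from Proposition~\ref{Theorem on mult-rep} (via conjugation by the unitary $\G$) and then read off \eqref{quasi-inv}. The paper simply asserts that unitarity of $U$ implies \eqref{quasi-inv}; your Steps~2--3 spell out this standard reproducing-kernel computation explicitly, so your proof is a detailed version of, rather than an alternative to, the paper's.
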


\begin{proof}
Let $\mathcal{U}(\H^{(\l,\mu)})$ be the set of unitary operators on $\H^{(\l,\mu)}$. We then define the mapping $U:\widetilde{\mob}^n\ra \mathcal{U}(\H^{(\l,\mu)})$ by the formula $$U(\tilde{\phi}^{-1})f:=J(\tilde{\phi},\cdot)f\circ \phi,~\tilde{\phi}\in\widetilde{\mob}^n,~f\in\H^{(\l,\mu)}.$$ Observe from Proposition \ref{Theorem on mult-rep} that $U$ is a unitary representation and consequently, the equation \eqref{quasi-inv} follows. \end{proof}

A canonical decomposition of the reproducing kernel $K^{(\l,\mu)}$ can be obtained using the identity \eqref{quasi-inv} and the expression of the co-cycle $J$ given in \eqref{cocycle}.  In order to obtain this decomposition, we need some notations. For $1\leq i\leq n$, denote $\tilde{S}_i$ to be the $i$-th shift defined as follows:
\beq\label{shift}\tilde{S}_ie_{\theta}=(\theta_i+1)e_{\theta+\varepsilon_i},~0\leq\theta\leq\a,\eeq 
and $D(z\ov{w})$ is the diagonal operator $D(z\ov{w})e_{\theta}=(1-z\ov{w})^{-\theta}e_{\theta},~0\leq \theta\leq\a$ where $\{\varepsilon_i:1\leq i\leq n\}$ and $\{e_{\theta}:\theta\in\L\}$ are the standard ordered bases of $\C^n$ and $\C^r$, respectively. For $z\in\D^n$, let $\tilde{\phi}_z\in\widetilde{\mob}^n$ with $p(\tilde{\phi}_z)=\phi_z\in\mob^n$ be the automorphism $\phi_z=(\phi_{z_1},\hdots,\phi_{z_n})$ where $p$ is the universal covering map and $\phi_{z_i}(w_i)=\frac{w_i-z_i}{1-\ov{z}_iw_i},~w_i\in\D,~1\leq i\leq n.$ Consequently, $c_{\phi_{z_i}}=-\frac{\ov{z}_i}{\sqrt{1-|z_i|^2}}$. The following lemma provides a canonical decomposition of the co-cycle $J$ introduced in \eqref{cocycle} in terms of the shift operators $\tilde{S}_i$, $1\leq i\leq n$ and the diagonal matrix $D(z\ov{w})$.




\begin{lem}\label{decomposition of cocycle}
The co-cycle $J$ defined in \eqref{cocycle} turns out to be 
\beq\label{factorisation of cocycle}
J(\phi_z,z)=(1-|z|^2)^{-\frac{\l}{2}}D(|z|^2)\exp\left(\sum_{i=1}^n\ov{z}_i\tilde{S}_i\right).
\eeq
\end{lem}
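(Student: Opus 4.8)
The plan is to evaluate the cocycle $J(\tilde\phi,z)$ at the particular element $\tilde\phi_z\in\widetilde{\mob}^n$ that implements the origin-to-$z$ translation, using the explicit matrix entries given in \eqref{cocycle}, and then to recognize the resulting matrix as the stated product of a scalar, a diagonal operator, and an exponential of a linear combination of shifts. The computation splits over the $n$ coordinates because $\phi_z=(\phi_{z_1},\dots,\phi_{z_n})$ acts coordinatewise and $(\phi_z')^{s}=\prod_i(\phi_{z_i}')^{s_i}$; so it suffices to carry out the one-variable computation and then take tensor products.

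First I would record the one-variable ingredients. For $\phi_{z_i}(w_i)=\frac{w_i-z_i}{1-\ov z_i w_i}$ one has $\phi_{z_i}'(w_i)=\frac{1-|z_i|^2}{(1-\ov z_i w_i)^2}$, hence $\phi_{z_i}'(z_i)=\frac{1-|z_i|^2}{(1-|z_i|^2)^2}=(1-|z_i|^2)^{-1}$, and $c_{\phi_{z_i}}=-\frac{\ov z_i}{\sqrt{1-|z_i|^2}}$ as already noted in the excerpt. Plugging $z=z$ into \eqref{cocycle}, the $(\theta,\eta)$ entry (for $0\le\eta_i\le\theta_i$, all $i$) becomes
\begin{align*}
J(\tilde\phi_z,z)_{\theta\eta}
&=\prod_{i=1}^n\binom{\theta_i}{\eta_i}(-c_{\phi_{z_i}})^{\theta_i-\eta_i}\,\phi_{z_i}'(z_i)^{\frac{\l_i+\theta_i+\eta_i}{2}}\\
&=\prod_{i=1}^n\binom{\theta_i}{\eta_i}\Big(\tfrac{\ov z_i}{\sqrt{1-|z_i|^2}}\Big)^{\theta_i-\eta_i}(1-|z_i|^2)^{-\frac{\l_i+\theta_i+\eta_i}{2}}\\
&=(1-|z|^2)^{-\frac{\l}{2}}\prod_{i=1}^n\binom{\theta_i}{\eta_i}\,\ov z_i^{\,\theta_i-\eta_i}\,(1-|z_i|^2)^{-\theta_i},
\end{align*}
where I used $(1-|z|^2)^{-\l/2}=\prod_i(1-|z_i|^2)^{-\l_i/2}$ and the exponent bookkeeping $-\tfrac{\theta_i-\eta_i}{2}-\tfrac{\theta_i+\eta_i}{2}=-\theta_i$. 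This is the crux of the argument, and the only potential pitfall is tracking the half-integer powers of $\phi_{z_i}'(z_i)$ correctly (that $\phi_{z_i}'(z_i)$ is a positive real number makes the principal branch unambiguous, so this is routine).

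Next I would match this against the right-hand side of \eqref{factorisation of cocycle}. Since $\tilde S_ie_\theta=(\theta_i+1)e_{\theta+\varepsilon_i}$, iterating gives $\tilde S_i^{k}e_\theta=\frac{(\theta_i+k)!}{\theta_i!}e_{\theta+k\varepsilon_i}$, so
\[
\exp\Big(\sum_{i=1}^n\ov z_i\tilde S_i\Big)e_\eta
=\prod_{i=1}^n\Big(\sum_{k_i\ge 0}\tfrac{\ov z_i^{\,k_i}}{k_i!}\tilde S_i^{k_i}\Big)e_\eta
=\sum_{\theta\ge\eta}\ \prod_{i=1}^n\tfrac{1}{(\theta_i-\eta_i)!}\tfrac{\theta_i!}{\eta_i!}\,\ov z_i^{\,\theta_i-\eta_i}\ e_\theta
=\sum_{\theta\ge\eta}\prod_{i=1}^n\binom{\theta_i}{\eta_i}\ov z_i^{\,\theta_i-\eta_i}\,e_\theta,
\]
where the sum is over $\theta$ with $0\le\theta\le\a$ and $\eta\le\theta$; the shifts commute because they act on disjoint index coordinates, so the exponential of the sum factors as the product of the exponentials. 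Applying $D(|z|^2)$ afterwards multiplies the $e_\theta$-component by $(1-|z|^2)^{-\theta}=\prod_i(1-|z_i|^2)^{-\theta_i}$, and the scalar prefactor $(1-|z|^2)^{-\l/2}$ supplies the remaining factor. Reading off the $(\theta,\eta)$ entry of $(1-|z|^2)^{-\l/2}D(|z|^2)\exp(\sum_i\ov z_i\tilde S_i)$ thus reproduces exactly the expression displayed above for $J(\tilde\phi_z,z)_{\theta\eta}$, and the entries vanish in both expressions unless $\eta\le\theta$; hence the two operators agree, proving \eqref{factorisation of cocycle}. No step here is genuinely hard; the work is entirely in the careful coordinatewise bookkeeping of binomial coefficients and powers.
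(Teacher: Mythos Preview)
Your proof is correct and follows essentially the same approach as the paper's: both compute the $(\theta,\eta)$ entry of $J(\tilde\phi_z,z)$ by substituting $\phi_{z_i}'(z_i)=(1-|z_i|^2)^{-1}$ and $-c_{\phi_{z_i}}=\ov z_i/\sqrt{1-|z_i|^2}$ into \eqref{cocycle}, simplify the powers of $1-|z_i|^2$, and then independently expand $D(|z|^2)\exp(\sum_i\ov z_i\tilde S_i)e_\eta$ to match. The only difference is cosmetic: you write the factors coordinatewise as $\prod_i(\cdots)$, while the paper keeps everything in multi-index shorthand throughout.
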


\begin{proof}
We begin by recalling from the equation \eqref{cocycle} that
$$J(\tilde{\phi},z)_{\theta\eta}={\theta\choose\eta}(-c_{\phi})^{\theta-\eta}(\phi')^{\frac{\l+\theta+\eta}{2}}(z)$$ whenever $0\leq\eta_i\leq\theta_i$. Substituting $\phi=\phi_z$ in this equation we have, for $0\leq\eta_i\leq\theta_i$ and $1\leq i\leq n$, that 
$$J(\tilde{\phi}_z,z)_{\theta\eta} ={\theta\choose\eta}\left(\frac{\ov{z}}{\sqrt{1-|z|^2}}\right)^{\theta-\eta}\left(\frac{1-|z|^2}{(1-|z|^2)^2}\right)^{\frac{\l+\theta+\eta}{2}}= (1-|z|^2)^{-\frac{\l}{2}}{\theta\choose\eta}\ov{z}^{\theta-\eta}(1-|z|^2)^{-\theta}.$$
On the other hand, note that
\Bea
D(|z|^2)\exp\left(\sum_{i=1}^n\ov{z}_i\tilde{S}_i\right)e_{\eta} &=& D(|z|^2)\exp(\ov{z}_n\tilde{S}_n)\cdots\exp(\ov{z}_1\tilde{S}_1)e_{\eta}\\
&= & D(|z|^2)\sum_{k=0}^{\a}\ov{z}^k{{\eta+k}\choose k}e_{\eta+k}\\
&=& D(|z|^2)\sum_{\theta=\eta}^{\a}{\theta\choose \eta}\ov{z}^{\theta-\eta}e_{\theta}\\
&=& \sum_{\theta=\eta}^{\a}{\theta\choose \eta}\ov{z}^{\theta-\eta}(1-|z|^2)^{-\theta}e_{\theta}
\Eea
where the second last equality is obtained by replacing $\eta+k$ to $\theta$ and using the identity ${\theta\choose \theta-\eta}={\theta\choose \eta}$. Thus it proves the desired identity.
\end{proof}

\begin{prop}\label{canonical decomposition of kernel}
The reproducing kernel $K^{(\l,\mu)}$ of the Hilbert space $\H^{(\l,\mu)}$ is of the form $$K^{(\l,\mu)}(z,w)=(1-z\ov{w})^{-\l}D(z\ov{w})\exp\left(\sum_{i=1}^n\ov{w}_i\tilde{S}_i\right)K^{(\l,\mu)}(0,0)\exp\left(\sum_{i=1}^nz_i\tilde{S}_i^*\right)D(z\ov{w})$$ for all $z,w\in\D^n$ where $K^{(\l,\mu)}(0,0)$ is a positive diagonal matrix.
\end{prop}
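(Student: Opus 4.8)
The plan is to reduce the asserted identity to its restriction to the diagonal $\{(z,z):z\in\D^n\}$ using the quasi-invariance relation \eqref{quasi-inv} together with the factorisation of the co-cycle from Lemma~\ref{decomposition of cocycle}, and then to recover the full statement from the uniqueness of sesqui-holomorphic extensions.

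First I would dispose of the side claim that $K^{(\l,\mu)}(0,0)$ is a positive diagonal matrix. By \eqref{rep-ker} it suffices to check this for each summand $\mu_\b^2 K_\b(z,w)$. From the formula for $K_\b$ and from \eqref{rep ker1}, the $(s,t)$-entry of $K_\b(0,0)$ is a non-negative constant multiple of $\del^{s-\b}\dbar^{t-\b}\prod_{i=1}^n(1-z_i\ov{w}_i)^{-(\l_i+2\b_i)}\big|_{z=w=0}$, the constant being $0$ unless $\b\le s$ and $\b\le t$; expanding each factor $(1-z_i\ov{w}_i)^{-(\l_i+2\b_i)}$ as a power series in $z_i\ov{w}_i$ shows this derivative vanishes unless $s=t$, and for $s=t$ with $\b\le s$ it equals $\prod_{i=1}^n(\l_i+2\b_i)_{s_i-\b_i}(s_i-\b_i)!>0$. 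Hence each $K_\b(0,0)$ is diagonal with non-negative diagonal entries; since every $\mu_\b>0$ and $\b=0$ always satisfies $\b\le s$, all diagonal entries of $K^{(\l,\mu)}(0,0)=\sum_{0\le\b\le\a}\mu_\b^2K_\b(0,0)$ are in fact strictly positive.

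Now fix $z\in\D^n$ and let $\tilde{\phi}_z\in\widetilde{\mob}^n$ be the element introduced before Lemma~\ref{decomposition of cocycle}, so that $\phi_z(z)=0$. Specialising \eqref{quasi-inv} to $\tilde{\phi}=\tilde{\phi}_z$ and $w=z$ gives
\[
K^{(\l,\mu)}(z,z)=J(\tilde{\phi}_z,z)\,K^{(\l,\mu)}(0,0)\,J(\tilde{\phi}_z,z)^{*}.
\]
Inserting the factorisation $J(\tilde{\phi}_z,z)=(1-|z|^2)^{-\l/2}D(|z|^2)\exp\big(\sum_{i=1}^n\ov{z}_i\tilde S_i\big)$ of Lemma~\ref{decomposition of cocycle}, and using that $(1-|z|^2)^{-\l/2}$ is a positive scalar, that $D(|z|^2)$ is self-adjoint, and that $\big(\exp(\sum_i\ov{z}_i\tilde S_i)\big)^{*}=\exp(\sum_i z_i\tilde S_i^{*})$, the right-hand side collapses to
\[
(1-|z|^2)^{-\l}\,D(|z|^2)\exp\Big(\sum_{i=1}^n\ov{z}_i\tilde S_i\Big)K^{(\l,\mu)}(0,0)\exp\Big(\sum_{i=1}^n z_i\tilde S_i^{*}\Big)D(|z|^2),
\]
which is precisely the right-hand side of the Proposition evaluated at $w=z$ (note $z\ov{w}|_{w=z}=|z|^2$ componentwise and $(1-z\ov{w})^{-\l}|_{w=z}=(1-|z|^2)^{-\l}$). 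Thus the claimed identity holds on the diagonal.

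It remains to pass from the diagonal to all of $\D^n\times\D^n$. The left-hand side $K^{(\l,\mu)}(z,w)$ is holomorphic in $z$ and anti-holomorphic in $w$, being a reproducing kernel; the right-hand side has the same property, since $(1-z\ov{w})^{-\l}$ and the diagonal matrix $D(z\ov{w})$ depend only on the quantities $z_i\ov{w}_i$, while $\exp(\sum_i\ov{w}_i\tilde S_i)$ and $\exp(\sum_i z_i\tilde S_i^{*})$ have entries polynomial in $\ov{w}$ and in $z$ respectively. A matrix-valued function on $\D^n\times\D^n$ that is holomorphic in $z$ and anti-holomorphic in $w$ is determined by its restriction to $\{(z,z):z\in\D^n\}$: expanding the difference of the two sides as $\sum_{a,b}c_{ab}z^a\ov{w}^b$ and using the linear independence of the functions $z\mapsto z^a\ov{z}^b$ forces every $c_{ab}$ to vanish. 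Hence the two sides agree everywhere, which completes the proof. Given Lemma~\ref{decomposition of cocycle}, the only genuine work is the adjoint bookkeeping in the middle step, and the one point deserving care is the verification that the right-hand side is sesqui-holomorphic so that this polarisation argument applies.
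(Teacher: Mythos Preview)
Your proof is correct and follows essentially the same route the paper's one-line proof alludes to: specialise the quasi-invariance identity \eqref{quasi-inv} to $\tilde{\phi}=\tilde{\phi}_z$ and $w=z$, insert the factorisation of $J(\tilde{\phi}_z,z)$ from Lemma~\ref{decomposition of cocycle}, and then polarise from the diagonal to all of $\D^n\times\D^n$ using sesqui-holomorphy. Your additional explicit verification that $K^{(\l,\mu)}(0,0)$ is a positive diagonal matrix and your careful check that the right-hand side is sesqui-holomorphic are the natural details the paper leaves implicit.
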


\begin{proof}
The proof follows from Lemma \ref{decomposition of cocycle} and Equation \eqref{quasi-inv}.
\end{proof}

We now prove the boundedness of the multiplication operators on $\H^{(\l,\mu)}$ using the following well-known lemma. Recall that the multiplication operator $M^{(\epsilon)}$ on $\H^{(\epsilon)}(\D)$ defined by $(M^{(\epsilon)}f)(z)=zf(z)$ is bounded for any $\epsilon>0$. Consequently, the reproducing kernel $K^{(\epsilon)}$ of the Hilbert space $\H^{(\epsilon)}(\D)$ satisfies the inequality \eqref{inq4} in  Lemma \ref{lem3} for any $\epsilon>0$. We use this technique to show that the $n$ - tuple of the multiplication operators by the co-ordinate functions on $\H^{(\l,\mu)}$ are bounded.

\begin{lem}\label{lem3}
Let $\H_K$ be a reproducing kernel Hilbert space with the reproducing kernel $K$ on some domain in $\C^n$. Then for $1\leq j\leq n$, the multiplication operator $M_{z_j}$ is bounded if and only if there exists a positive constant $c_j$ such that \beq\label{inq4}(c_j - z_j\ov{w}_j)K(\bz,\bw)\geq 0.\eeq
\end{lem}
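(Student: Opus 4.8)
The statement to prove is Lemma \ref{lem3}: for a reproducing kernel Hilbert space $\H_K$ with kernel $K$ on a domain in $\C^n$, the multiplication operator $M_{z_j}$ is bounded if and only if there is $c_j>0$ with $(c_j-z_j\ov{w}_j)K(\bz,\bw)\geq 0$ as a kernel. First I would recall the standard duality between $M_{z_j}$ and its adjoint: $M_{z_j}^*K(\cdot,\bw)\xi = \ov{w}_j K(\cdot,\bw)\xi$ for all $\bw$ in the domain and $\xi\in\C^r$, which follows by testing against an arbitrary $f\in\H_K$ using the reproducing property \eqref{rep property} together with the fact that $\langle M_{z_j}f, K(\cdot,\bw)\xi\rangle = \langle f, M_{z_j}^*K(\cdot,\bw)\xi\rangle$ and $\langle M_{z_j}f,K(\cdot,\bw)\xi\rangle = \langle (M_{z_j}f)(\bw),\xi\rangle = \ov{w}_j\langle f(\bw),\xi\rangle$. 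Since the kernel sections $\{K(\cdot,\bw)\xi\}$ have dense span, this identifies $M_{z_j}^*$ completely on a total set.

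For the forward direction, suppose $M_{z_j}$ is bounded with $\|M_{z_j}\|^2 \leq c_j$. Then $c_j I - M_{z_j}M_{z_j}^* \geq 0$ as an operator on $\H_K$. Pairing this positive operator against finite linear combinations $\sum_k K(\cdot,\bw^{(k)})\xi_k$ of kernel sections and using the eigenrelation above, the quadratic form $\langle (c_jI - M_{z_j}M_{z_j}^*)\sum_k K(\cdot,\bw^{(k)})\xi_k, \sum_l K(\cdot,\bw^{(l)})\xi_l\rangle$ evaluates to $\sum_{k,l}\langle (c_j - w^{(l)}_j\ov{w}^{(k)}_j)K(\bw^{(l)},\bw^{(k)})\xi_k,\xi_l\rangle$, which is therefore $\geq 0$; this is exactly the statement that the kernel $(c_j-z_j\ov{w}_j)K(\bz,\bw)$ is positive (semi-)definite. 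Conversely, if $(c_j-z_j\ov{w}_j)K(\bz,\bw)\geq 0$, then the operator $c_jI - M_{z_j}M_{z_j}^*$, which is defined as a densely-defined quadratic form on the span of kernel sections via the same computation, is nonnegative on that dense set; hence $M_{z_j}^*$ is bounded on the span of kernel sections with $\|M_{z_j}^*\|^2\leq c_j$, extends to a bounded operator on all of $\H_K$, and so $M_{z_j}$ is bounded. One must be slightly careful that $M_{z_j}$ is a priori only densely defined (on, say, the kernel sections, or on polynomials times kernel sections); the standard argument is that $M_{z_j}^*$ being bounded on a dense set forces $M_{z_j}$ to be bounded and everywhere defined, since its graph is closed.

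The computation is essentially routine once the eigenrelation $M_{z_j}^*K(\cdot,\bw)\xi=\ov{w}_j K(\cdot,\bw)\xi$ is in hand, so the main point to handle with care is the domain/closability issue: a priori $M_{z_j}$ need not be bounded or even everywhere defined, so the equivalence has to be phrased as "$M_{z_j}$ extends to (equivalently, is) a bounded operator iff the kernel inequality holds." I would dispatch this by noting that $M_{z_j}^*$ is always well-defined on $\mathrm{span}\{K(\cdot,\bw)\xi\}$ by the eigenrelation, that the kernel inequality is equivalent to $\|M_{z_j}^*h\|^2\leq c_j\|h\|^2$ for $h$ in this dense span, and that boundedness of $M_{z_j}^*$ on a dense set is equivalent to $M_{z_j}$ being bounded. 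This is the "well-known" content the paper is invoking, and since the excerpt only asks to record the lemma, a short proof along these lines suffices; alternatively one may simply cite the standard reference (e.g., the multiplier theory for reproducing kernel Hilbert spaces) as the paper's phrasing "the following well-known lemma" suggests is acceptable.
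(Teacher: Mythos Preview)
The paper does not prove this lemma at all: it is introduced as ``the following well-known lemma'' and stated without proof, then immediately used. Your argument is precisely the standard one (identify $M_{z_j}^*$ on kernel sections via the eigenrelation $M_{z_j}^*K(\cdot,\bw)\xi=\ov{w}_jK(\cdot,\bw)\xi$, then match positivity of $c_jI-M_{z_j}M_{z_j}^*$ with positive-definiteness of $(c_j-z_j\ov{w}_j)K(\bz,\bw)$ on finite sums of kernel sections), and it is correct, including your handling of the domain/closability issue in the converse direction.
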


\begin{thm}\label{boundedness}
The multiplication operators $M_{z_1},\hdots,M_{z_n}$ by the co-ordinate functions on $\H^{(\l,\mu)}$ are bounded.
\end{thm}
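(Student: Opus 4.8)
The plan is to apply Lemma~\ref{lem3} coordinate-wise to the kernel $K^{(\l,\mu)}$ using the canonical factorisation obtained in Proposition~\ref{canonical decomposition of kernel}. Fix $j$ with $1\le j\le n$; I must produce a constant $c_j>0$ such that $(c_j-z_j\ov w_j)K^{(\l,\mu)}(\bz,\bw)\succeq 0$ as an $r\times r$ matrix-valued kernel. The natural strategy is to reduce to the scalar one-variable estimates that are already available: each weighted Bergman kernel $K^{(\epsilon)}(z,w)=(1-z\ov w)^{-\epsilon}$ satisfies $(c-z\ov w)(1-z\ov w)^{-\epsilon}\succeq 0$ for suitable $c$ because $M^{(\epsilon)}$ is bounded, and by the corollary $K^{(\l,\mu)}=\sum_{0\le\b\le\a}\mu_\b^2 K_\b$ with each $K_\b$ built out of $\del^{s-\b}\bar\del^{t-\b}K^{(\l+2\b)}$. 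Since a finite positive-coefficient sum of positive-definite kernels is positive definite, it suffices to show $(c_j-z_j\ov w_j)K_\b(\bz,\bw)\succeq 0$ for every $\b$, with a constant $c_j$ that can be chosen uniformly in $\b$ (there are only finitely many $\b\le\a$).

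First I would record the mechanism that makes $K_\b$ positive definite in the first place: $K_\b(\bz,\bw)=L_\b(z)L_\b(w)^*$-type expression coming from the map $\G_\b$, namely $K_\b(\cdot,w)e_\theta=\G_\b\bigl(\tfrac{\binom{\theta}{\b}}{(\l+2\b)_{\theta-\b}}\dbar^{\theta-\b}K^{(\l+2\b)}(\cdot,w)\bigr)$, so that $\H_{K_\b}=\mathrm{Im}(\G_\b)$ is unitarily isomorphic, via $\G_\b$, to $\H^{(\l+2\b)}=\bigotimes_{i=1}^n\H^{(\l_i+2\b_i)}(\D)$. Under this unitary the multiplication operator $M_{z_j}$ on $\mathrm{Im}(\G_\b)$ is conjugate to an operator on $\H^{(\l+2\b)}$; because $\G_\b$ intertwines differentiation in a controlled way, $\G_\b^{-1}M_{z_j}\G_\b$ differs from $M_{z_j}$ on $\bigotimes_i\H^{(\l_i+2\b_i)}(\D)$ by lower-order terms built from the shifts $\tilde S_i$, and in any case it is bounded because $M_{z_j}=I\otimes\cdots\otimes M^{(\l_j+2\b_j)}\otimes\cdots\otimes I$ is bounded on the tensor product and the correction is a nilpotent (hence bounded) operator. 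That gives boundedness of $M_{z_j}$ on each summand $\mathrm{Im}(\G_\b)$, and then boundedness on $\H^{(\l,\mu)}=\G\bigl(\bigoplus_\b\H^{(\l+2\b)}\bigr)$ follows since $\G$ is unitary and the action of $M_{z_j}$ respects the images of the $\G_\b$'s up to the (finite, uniformly norm-bounded) data encoded in $K^{(\l,\mu)}(0,0)$ and the shifts.

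Alternatively — and this is the cleaner route to write up — I would work directly with the factorisation $K^{(\l,\mu)}(\bz,\bw)=(1-z\ov w)^{-\l}D(z\ov w)\exp\bigl(\sum_i\ov w_i\tilde S_i\bigr)K^{(\l,\mu)}(0,0)\exp\bigl(\sum_i z_i\tilde S_i^*\bigr)D(z\ov w)$. Writing $P(\bz,\bw):=\exp(\sum_i\ov w_i\tilde S_i)K^{(\l,\mu)}(0,0)\exp(\sum_i z_i\tilde S_i^*)$, which is a positive-definite $r\times r$ kernel (it is $A(w)K^{(\l,\mu)}(0,0)A(z)^*$ with $A(z)=\exp(\sum_i z_i\tilde S_i^*)^*$ and $K^{(\l,\mu)}(0,0)\succ0$), and noting that $D(z\ov w)$ is the diagonal with entries $\prod_i(1-z_i\ov w_i)^{-\theta_i}$, I would bound $(c_j-z_j\ov w_j)K^{(\l,\mu)}(\bz,\bw)$ by combining: positivity of $(c-z_j\ov w_j)(1-z_j\ov w_j)^{-m}$ for the relevant exponents $m\in\{\l_j,\l_j+\theta_j+\eta_j,\dots\}$ (all bounded above by $\l_j+2\a_j$, a fixed number), the Schur-product theorem for Hadamard products of positive-definite kernels, and positivity of $P$. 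Concretely, absorbing the diagonal factors into the exponents, each matrix entry of $K^{(\l,\mu)}(\bz,\bw)$ is a sum of terms of the form $(1-z_j\ov w_j)^{-m_j}(\text{positive kernel in the other variables})$, so choosing $c_j$ larger than the $M^{(\epsilon)}$-bound for $\epsilon=\l_j+2\a_j$ makes $(c_j-z_j\ov w_j)$ times each such term positive definite, and the sum of finitely many positive-definite kernels is positive definite.

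The main obstacle is the bookkeeping in the second approach: the exponents attached to the $z_j\ov w_j$ factor are not uniform across matrix entries — they depend on $\theta,\eta,\b$ — and one must be careful that the Hadamard-factorisation argument genuinely isolates a single scalar factor $(1-z_j\ov w_j)^{-m}$ from each entry with the complementary factor still a positive-definite matrix kernel. The safest way to finesse this is to invoke Lemma~\ref{lem3} in the reverse direction on each tensor-product summand: since $M_{z_j}$ restricted to $\H^{(\l_j+2\b_j)}(\D)$ is bounded with norm controlled by $\l_j+2\a_j$, the inequality $(c_j-z_j\ov w_j)K^{(\l+2\b)}(\bz,\bw)\succeq0$ holds for a $\b$-independent $c_j$; differentiating this inequality in the $\bz$ and $\bw$ variables preserves positivity (derivatives of a positive-definite kernel along matched directions stay positive definite), which directly yields $(c_j-z_j\ov w_j)K_\b(\bz,\bw)\succeq0$ after accounting for the extra $\del_j\bar\del_j$ acting on the $(1-z_j\ov w_j)$ factor via the Leibniz rule; summing over $\b$ with weights $\mu_\b^2$ and applying Lemma~\ref{lem3} once more finishes the proof. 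I would present the argument in this last form, stating the key sublemma ``if $(c-z_j\ov w_j)K(\bz,\bw)\succeq0$ then $(c-z_j\ov w_j)\del_j^a\bar\del_j^a K(\bz,\bw)\succeq0$'' — or more precisely the mixed-derivative version needed — and then assembling the pieces.
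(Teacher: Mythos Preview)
Your central sublemma --- that $(c_j-z_j\ov w_j)K_\b(z,w)\geq 0$ holds for each $\b\in\L$ separately --- is false, and this breaks the approach you ultimately settle on. The obstruction is that the summand $\H_{K_\b}=\mathrm{Im}(\G_\b)$ is \emph{not} invariant under multiplication by $z_j$. Concretely, take $n=1$, $\a=1$, $\b=0$: then $\G_0(f)=(f,\,f'/\l)$, and $M_z\G_0(f)=(zf,\,zf'/\l)$ lies in $\mathrm{Im}(\G_0)$ only when $(zf)'=zf'$, i.e.\ $f=0$. Since the ``if'' direction of Lemma~\ref{lem3} says that $(c-z\ov w)K_0\geq 0$ would force $M_z$ to be a bounded operator \emph{from $\H_{K_0}$ into itself}, no such $c$ can exist --- and yet the full kernel $K^{(\l,\mu)}=K_0+\mu_1^2K_1$ does admit one. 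Your Leibniz remark does not rescue this: the cross terms produced when $\del_j,\dbar_j$ hit the factor $c_j-z_j\ov w_j$ are precisely what encode the failure of $M_{z_j}$-invariance of the jet space, and they do not have a definite sign. Your first approach suffers the same defect, since it too presumes a compatibility between $M_{z_j}$ and the individual images $\mathrm{Im}(\G_\b)$ that is simply not there.

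The paper's proof never decomposes along the $K_\b$. Instead, using the form in Proposition~\ref{canonical decomposition of kernel}, it factors the \emph{whole} kernel as a Schur product $K^{(\l,\mu)}=K^{(\epsilon)}\cdot K^{(\l-\epsilon,\mu')}$ for small $\epsilon=(\epsilon_1,\dots,\epsilon_n)>0$, where $K^{(\epsilon)}(z,w)=\prod_i(1-z_i\ov w_i)^{-\epsilon_i}$ is a scalar kernel (so the known boundedness of $M_{z_j}$ on $\H^{(\epsilon)}$ already gives $(c_j-z_j\ov w_j)K^{(\epsilon)}\geq 0$) and $K^{(\l-\epsilon,\mu')}$ is another kernel of the same family, hence positive definite by construction. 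The only point to check is the existence of positive $\mu'$ with $K^{(\l-\epsilon,\mu')}(0,0)=K^{(\l,\mu)}(0,0)$; writing the diagonal of $K^{(\l,\mu)}(0,0)$ as $L(\l)(\mu^2)$ for an invertible lower-triangular matrix $L(\l)$ depending continuously on $\l$, one sets $(\mu')^2=L(\l-\epsilon)^{-1}L(\l)(\mu^2)$, which has positive entries for $\epsilon$ small enough. Your abandoned second approach was headed in this direction, but the missing idea is exactly this: rather than isolate a scalar factor entry-by-entry, peel off a uniform scalar $K^{(\epsilon)}$ and recognise the remainder as a kernel of the same type.
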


\begin{proof}
We begin by recalling that the multiplication operators $M_{z_1},\hdots,M_{z_n}$ by the co-ordinate functions on the reproducing kernel Hilbert space $\H^{(\epsilon)}(\D^n)$ with the reproducing kernel $K^{(\epsilon)}(z,w)$ $=\prod_{i}^n(1-z_i\ov{w}_i)^{-\epsilon_i}$ for $z,w\in\D^n$ are bounded for all $\epsilon=(\epsilon_1,\hdots,\epsilon_n)\in\mathbb{R}_{>0}^n$. Consequently, it follows from Lemma \ref{lem3} that for each $i=1,\hdots,n$, there exists a positive constant $c_i>0$ such that the inequality in \eqref{inq4} holds with $K=K^{(\epsilon)}$. In view of this observation together with Lemma \ref{lem3}, it is enough to write down the kernel $K^{(\l,\mu)}$ as a product of $K^{(\epsilon)}$ and a positive definite kernel $K_1$ on $\D^n$ for some $\epsilon_1,\hdots,\epsilon_n>0$.  Note that the expression of $K^{(\l,\mu)}$ obtained in Theorem \ref{canonical decomposition of kernel} suggests $K^{(\l-\epsilon,\mu')}$ to be a possible choice for $K_1$ provided $K^{(\l-\epsilon,\mu')}(0,0)$ is same as $K^{(\l,\mu)}(0,0)$ for some positive real numbers $\mu'_{\b}$ for $0\leq\b\leq\a$ with $\mu_0'=1$ and $\epsilon>0$. In rest of the proof, we show the existence of such positive real numbers $\mu'_{\b}$ for $0\leq\b\leq\a$ and $\epsilon>0$.

For $0\leq\theta\leq\a$, recall from the equation \eqref{rep-ker} that the $\theta$-th entry of the diagonal matrix $K^{(\l,\mu)}(0,0)$ is $$K^{(\l,\mu)}(0,0)_{\theta\theta}=\sum_{0\leq\b\leq\theta}\mu_{\theta}^2K_{\b}(0,0)_{\theta\theta}$$ with $\mu_0=1$ and $$K_{\b}(z,w)=\left(\!\!\!\left(\frac{{s\choose\b}{t\choose\b}}{(\l+2\b)_{s-\b}(\l+2\b)_{t-\b}}\del^{s-\b}\dbar^{t-\b}K^{(\l+2\b)}(z,w)\right)\!\!\!\right)_{0\leq s,t\leq \a}.$$ Thus it follows that for each $0\leq\theta\leq\a$, $$K_{\b}(0,0)_{\theta\theta}=\frac{{\theta\choose\b}^2(\theta-\b)!}{(\l+2\b)_{\theta-\b}}$$ and consequently, 
\beq \label{K(0,0)}
K^{(\l,\mu)}(0,0)_{\theta\theta}=\sum_{0\leq\b\leq\theta}\mu_{\b}^2\frac{{\theta\choose\b}^2(\theta-\b)!}{(\l+2\b)_{\theta-\b}}.\eeq

Let $[K^{(\l,\mu)}(0,0)]$ be the column vector whose $\theta$-th entry is $K^{(\l,\mu)}(0,0)_{\theta\theta}$ for $0\leq\theta\leq\a$. Then observe from the equation \eqref{K(0,0)} that 
\beq\label{K(00)}[K^{(\l,\mu)}(0,0)]=L(\l)(\mu^2)\eeq
 where $\mu^2$ is the column vector with $\mu_{\b}^2$ in it's $\b$-th position for $0\leq\b\leq\a$ and $L(\l)$ is the lower triangular matrix whose $\theta\b$-th entry is $\frac{{\theta\choose\b}^2(\theta-\b)!}{(\l+2\b)_{\theta-\b}}$ for $0\leq\b\leq\theta$. Also, note that $L(\l)$ is invertible and continuous in $\l$ verifying that $L(\l-\varepsilon)^{-1}L(\l)(\mu^2)$ is a vector with all of it's entries being positive, for sufficiently small $\epsilon=(\epsilon_1,\hdots,\epsilon_n)$ with each $\epsilon_i>0$. Thus it completes the proof with the choice $(\mu')^2=L(\l-\epsilon)^{-1}L(\l)(\mu^2)$.
\end{proof}

We conclude this section by showing that the adjoint of the $n$ - tuple of multiplication operators $(M_{z_1},\hdots,M_{z_n})$ by the co-ordinate functions on $\H^{(\l,\mu)}$ is in the Cowen-Douglas class $\mathrm B_r(\D^n)$. From now on, this $n$ - tuple $(M_{z_1},\hdots,M_{z_n})$ will be denoted by $\M^{(\l,\mu)}$.

\begin{thm}\label{CD class}
The adjoint of the $n$-tuple of multiplication operators $\M^{(\l,\mu)}$ is in $\mathrm B_r(\D^n)$ where $r$ is the cardinality of the set $\L=\{\b\in(\N\cup\{0\})^n:\b\leq \a\}$.
\end{thm}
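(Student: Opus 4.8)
The plan is to verify the three defining conditions of the Cowen-Douglas class $\mathrm B_r(\D^n)$ for $\M^{(\l,\mu)*}$, as recalled in the Preliminaries via the operator $D_{\M^{(\l,\mu)*}}$. Concretely, I must check that $\dim\ker D_{\M^{(\l,\mu)*}-zI}=r$ for every $z\in\D^n$, that $\mathrm{ran}\, D_{\M^{(\l,\mu)*}-zI}$ is closed, and that $\bigvee_{z\in\D^n}\ker D_{\M^{(\l,\mu)*}-zI}$ is dense in $\H^{(\l,\mu)}$. The most efficient route is to exploit the reproducing-kernel description: for a vector-valued reproducing kernel Hilbert space $\H_K$ with $K:\D^n\times\D^n\to\mathrm M(r,\C)$, one has $\ker D_{\M^*-zI}=\{K(\cdot,\bar z)\xi:\xi\in\C^r\}$ (the sections of $E_{\M^*}$), provided $z\mapsto K(\cdot,z)$ is injective in the appropriate sense, i.e.\ $K(z,z)$ is invertible for each $z$. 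So the crux reduces to showing that the matrix $K^{(\l,\mu)}(z,z)$ is invertible (indeed positive definite) for every $z\in\D^n$, together with standard closed-range and completeness arguments.

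\textbf{Step 1: invertibility of $K^{(\l,\mu)}(z,z)$.} I would use the canonical factorisation obtained in Proposition \ref{canonical decomposition of kernel}:
\[
K^{(\l,\mu)}(z,w)=(1-z\ov w)^{-\l}D(z\ov w)\exp\!\Big(\sum_{i=1}^n\ov w_i\tilde S_i\Big)K^{(\l,\mu)}(0,0)\exp\!\Big(\sum_{i=1}^n z_i\tilde S_i^*\Big)D(z\ov w).
\]
Setting $w=z$, the scalar factor $(1-|z|^2)^{-\l}$ (meaning $\prod_i(1-|z_i|^2)^{-\l_i}$) is a positive number, $D(|z|^2)$ is an invertible positive diagonal matrix, $K^{(\l,\mu)}(0,0)$ is a positive diagonal matrix by the same Proposition (with strictly positive entries by the explicit formula \eqref{K(0,0)}), and $\exp(\sum_i z_i\tilde S_i^*)$ is invertible with inverse $\exp(-\sum_i z_i\tilde S_i^*)$ since the $\tilde S_i$ are nilpotent. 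Hence $K^{(\l,\mu)}(z,z)$ is a product of invertible matrices, in fact congruent to a positive definite matrix, so it is positive definite for all $z\in\D^n$. This immediately gives $\dim\ker D_{\M^{(\l,\mu)*}-zI}=r$.

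\textbf{Step 2: density of eigenvectors.} Since the linear span of $\{K^{(\l,\mu)}(\cdot,w)\xi:w\in\D^n,\xi\in\C^r\}$ is dense in $\H^{(\l,\mu)}$ (a standard fact recalled in the Preliminaries after \eqref{rep property}), and each such vector lies in $\ker D_{\M^{(\l,\mu)*}-\bar w I}$ by the reproducing property \eqref{rep property} applied to $M_{z_j}^*K(\cdot,w)=\bar w_j K(\cdot,w)$, the span $\bigvee_{z\in\D^n}\ker D_{\M^{(\l,\mu)*}-zI}$ is all of $\H^{(\l,\mu)}$, so in particular dense.

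\textbf{Step 3: closed range.} The multiplication operators $M_{z_1},\dots,M_{z_n}$ are bounded by the preceding Theorem, so $\M^{(\l,\mu)*}$ is a bounded commuting tuple; moreover each $M_{z_j}$ is bounded below modulo a finite-rank (indeed, the relevant Fredholm-type) estimate coming from the inequality $(c_j-z_j\ov w_j)K^{(\l,\mu)}(\bz,\bw)\ge 0$ of Lemma \ref{lem3}. The standard argument (cf.\ \cite{GBKCD}) is that $\dim\ker D_{\M^*-zI}$ being finite and locally constant, together with boundedness of the tuple, forces $\mathrm{ran}\,D_{\M^{(\l,\mu)*}-zI}$ to be closed; alternatively one checks directly that $D_{\M^{(\l,\mu)*}-zI}$ is bounded below on the orthogonal complement of its kernel, using that $\H^{(\l,\mu)}$ is unitarily a direct sum (via $\G$) of tensor products of weighted Bergman spaces $\H^{(\l_i+2\b_i)}(\D)$ on which the coordinate multiplication operators are well understood. \textbf{The main obstacle} is Step 3: while Steps 1 and 2 are essentially formal once the factorisation of the kernel is in hand, verifying the closed-range condition cleanly requires either invoking the general Cowen-Douglas machinery that says the first and third bullet points plus boundedness already imply the second, or doing a hands-on norm estimate transported through the unitary $\G$; I would cite the former (e.g.\ \cite{OPOSE,GBKCD,HHHVB1}) to keep the proof short, noting that the genuinely new content of the theorem is the rank computation $\dim\ker D_{\M^{(\l,\mu)*}-zI}=|\L|=r$ delivered by Step 1.
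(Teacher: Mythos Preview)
Your approach is quite different from the paper's. The paper gives a one-line argument: the unitary $\G$ realises $\H^{(\l,\mu)}$ as $\bigoplus_{0\le\b\le\a}\H^{(\l+2\b)}$, and (because $\G_\b$ is built from derivatives, so that $\G^{-1}M_{z_j}\G$ differs from the direct-sum multiplication only by a block-strictly-triangular bounded perturbation) the tuple $\M^{(\l,\mu)*}$ is \emph{similar} to the adjoint of multiplication on the direct sum. Each summand is a tensor product of weighted Bergman spaces, so its multiplication adjoint lies in $\mathrm B_1(\D^n)$; hence the direct sum lies in $\mathrm B_r(\D^n)$, and the Cowen--Douglas class is invariant under similarity. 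That is the entire proof.

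Your direct verification has two genuine gaps. First, in Step~1 you assert that invertibility of $K^{(\l,\mu)}(z,z)$ yields $\ker D_{\M^*-\bar zI}=\{K(\cdot,z)\xi:\xi\in\C^r\}$. Invertibility only gives the inclusion $\supseteq$ and hence $\dim\ker\ge r$; the reverse inclusion is equivalent to $\overline{\sum_j(M_{z_j}-z_j)\H^{(\l,\mu)}}=\{f:f(z)=0\}$, a Gleason-type statement that is \emph{not} automatic for a general vector-valued RKHS with bounded multipliers. Second, in Step~3 you appeal to ``general Cowen--Douglas machinery that says the first and third bullet points plus boundedness already imply the second''; no such implication holds in general---the closed-range condition is an independent hypothesis.

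Both gaps are repaired precisely by the alternative you mention at the end of Step~3, namely transporting everything through $\G$ to the direct sum $\bigoplus\H^{(\l+2\b)}$, where the three Cowen--Douglas conditions are classical. But once you do that, you have reproduced the paper's similarity argument, and the kernel-factorisation work of Step~1 becomes unnecessary.
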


\begin{proof}

From Lemma \ref{lemma equality} and a routine generalization of \cite[Theorem 5.1]{RKHS}, it follows that the identity map between $\H^{(\l, \mu)}$ and $\bigoplus_{0\leq\b\leq\a}\H^{(\l+2\b)}$ is invertible. Also, the identity map intertwines the multiplication operators on $\H^{(\l, \mu)}$ and $\bigoplus_{0\leq\b\leq\a}\H^{(\l+2\b)}$. Therefore, the adjoint of the tuple of multiplication operators ${\boldsymbol{M}^{(\l, \mu)}}^*$ on $\H^{(\l, \mu)}$ is similar to the adjoint of multiplication operators on $\bigoplus_{0\leq\b\leq\a}\H^{(\l+2\b)}$. Now the theorem follows from the fact that the Cowen-Douglas class $\mathrm B_r(\D^n)$ is invariant under the similarity.
\end{proof}

\section{Irreducibility}\label{irreducible}

In this section, we prove that the $n$ - tuples of multiplication operators $\M^{(\l,\mu)}$ are irreducible. The lemma below, modelled after Lemma 5.1 in \cite{HOHS}, can be proved exactly in the same way as in the original proof, so it is omitted.

\begin{lem}
Suppose that the $n$ - tuple of multiplication operators $\M=(M_{z_1},\hdots, M_{z_n})$ by the co-ordinate functions on a reproducing kernel Hilbert space $\H$ with the reproducing kernel $K$ is in $\mathrm B_r(\D^n)$. If there exists an orthogonal projection $X$ commuting with $\M$ then 
$$\Phi_X(z)K(z,w)~=~K(z,w)\ov{\Phi_X(w)}^{\text{tr}}$$ for some holomorphic function $\Phi_X:\D^n\ra\C^{r\times r}$ with $\Phi_X^2=\Phi_X$.
\end{lem}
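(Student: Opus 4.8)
The plan is to show that the commuting orthogonal projection $X$ must itself be a multiplication operator, given by a holomorphic idempotent-valued $r\times r$ matrix function, and then to extract the asserted kernel identity from the self-adjointness of $X$. This is the argument of \cite[Lemma 5.1]{HOHS}, transcribed to $\D^{n}$.

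First I would record how $X$ acts on the kernel sections. Since $X=X^{*}$ commutes with each $M_{z_{i}}$, it commutes with each $M_{z_{i}}^{*}$, so $X$ leaves invariant every joint eigenspace $\ker D_{\M^{*}-\bar w I}$ of $\M^{*}$. Because $\M^{*}\in\mathrm B_{r}(\D^{n})$, one has the standard identification $\ker D_{\M^{*}-\bar w I}=\{K(\cdot,w)\xi:\xi\in\C^{r}\}$ for each $w\in\D^{n}$, a space of dimension $r$; in particular $K(w,w)$ is invertible. Hence, for each $w$, there is a unique $\Psi(w)\in\C^{r\times r}$ with $X(K(\cdot,w)\xi)=K(\cdot,w)\Psi(w)\xi$ for all $\xi\in\C^{r}$.

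Next I would identify $X$ as multiplication by $\Psi(\cdot)^{*}$ and check that this matrix function is holomorphic. For $f\in\H$, $w\in\D^{n}$ and $\eta\in\C^{r}$, the reproducing property \eqref{rep property} gives
\[
\langle (Xf)(w),\eta\rangle_{\C^{r}}=\langle Xf,K(\cdot,w)\eta\rangle=\langle f,XK(\cdot,w)\eta\rangle=\langle f,K(\cdot,w)\Psi(w)\eta\rangle=\langle f(w),\Psi(w)\eta\rangle_{\C^{r}},
\]
so $(Xf)(w)=\Psi(w)^{*}f(w)$; set $\Phi_{X}:=\Psi(\cdot)^{*}$. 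Applying this with $f=K(\cdot,z_{0})\xi$ shows that $w\mapsto\Phi_{X}(w)K(w,z_{0})\xi$ lies in $\H$, hence is holomorphic; since $K(w,z_{0})$ is invertible for $w$ near $z_{0}$, one can write $\Phi_{X}(w)=[\Phi_{X}(w)K(w,z_{0})]\,K(w,z_{0})^{-1}$, which is holomorphic near $z_{0}$, and as $z_{0}\in\D^{n}$ is arbitrary, $\Phi_{X}$ is holomorphic on $\D^{n}$. The idempotency $\Phi_{X}^{2}=\Phi_{X}$ then follows from $X^{2}=X$ evaluated on $K(\cdot,w)\xi$, cancelling the invertible $K(w,w)$ (equivalently, using that $f\mapsto f(w)$ is onto $\C^{r}$).

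Finally I would deduce the kernel identity from self-adjointness. From $X=X^{*}$,
\[
\langle XK(\cdot,w)\xi,K(\cdot,z)\eta\rangle=\langle K(\cdot,w)\xi,XK(\cdot,z)\eta\rangle,
\]
and the reproducing property rewrites the left-hand side as $\langle K(z,w)\Psi(w)\xi,\eta\rangle_{\C^{r}}$ and the right-hand side as $\langle K(z,w)\xi,\Psi(z)\eta\rangle_{\C^{r}}=\langle \Psi(z)^{*}K(z,w)\xi,\eta\rangle_{\C^{r}}$; therefore $\Phi_{X}(z)K(z,w)=K(z,w)\Psi(w)=K(z,w)\overline{\Phi_{X}(w)}^{\text{tr}}$ for all $z,w\in\D^{n}$. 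The only non-formal input — and the sole point at which the Cowen-Douglas hypothesis is genuinely used — is the identification $\ker D_{\M^{*}-\bar w I}=\{K(\cdot,w)\xi:\xi\in\C^{r}\}$ together with invertibility of $K(w,w)$; everything else is bookkeeping with \eqref{rep property}, so I expect no real obstacle here.
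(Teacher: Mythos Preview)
Your proof is correct and follows exactly the route the paper intends: the paper omits the proof, noting it is identical to \cite[Lemma~5.1]{HOHS} transcribed to $\D^{n}$, which is precisely what you have written out. The key steps---invariance of $\ker D_{\M^{*}-\bar w I}=\{K(\cdot,w)\xi:\xi\in\C^{r}\}$ under $X$, identification of $X$ as multiplication by a holomorphic idempotent $\Phi_{X}$, and the kernel identity from $X=X^{*}$---are all handled correctly.
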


Thus $\M^{(\l,\mu)}$ on $\H^{(\l,\mu)}$ is irreducible if and only if there is no non-trivial projection $X_0$ on $\C^r$ satisfying 
$$
X_0K^{(\l,\mu)}_0(z,0)^{-1}K^{(\l,\mu)}_0(z,w)K^{(\l,\mu)}_0(0,w)^{-1}~=~K^{(\l,\mu)}_0(z,0)^{-1}K^{(\l,\mu)}_0(z,w)K^{(\l,\mu)}_0(0,w)^{-1}X_0$$
where $K^{(\l,\mu)}_0(z,w)=K^{(\l,\mu)}(0,0)^{-\frac{1}{2}}K^{(\l,\mu)}(z,w)K^{(\l,\mu)}(0,0)^{-\frac{1}{2}}$. Let 
\beq\label{normalized kernel} \hat{K}^{(\l,\mu)}(z,w) = K^{(\l,\mu)}_0(z,0)^{-1}K^{(\l,\mu)}_0(z,w)K^{(\l,\mu)}_0(0,w)^{-1}\eeq which is called the normalized kernel associated to $K$ at origin. 

\begin{thm}\label{irreducibility}
The tuple of multiplication operators $\M^{(\l,\mu)}$ on $\H^{(\l,\mu)}$ are irreducible.
\end{thm}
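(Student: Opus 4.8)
The plan is to use the criterion stated just before the theorem: $\M^{(\l,\mu)}$ is irreducible if and only if the only projections $X_0$ on $\C^r$ commuting with $\hat{K}^{(\l,\mu)}(z,w)$ for all $z,w\in\D^n$ are $0$ and $I$. So the whole game is to understand the normalized kernel $\hat{K}^{(\l,\mu)}$ well enough to see that its commutant (among projections) is trivial. The starting point is the canonical decomposition from Proposition \ref{canonical decomposition of kernel}, which after conjugating by $K^{(\l,\mu)}(0,0)^{-1/2}$ and stripping off the scalar factor $(1-z\ov w)^{-\l}$ and the diagonal factors $D(z\ov w)$, expresses $\hat{K}^{(\l,\mu)}(z,w)$ in terms of $\exp(\sum_i \ov{w}_i \tilde S_i)$, the diagonal matrix $K^{(\l,\mu)}(0,0)$, and $\exp(\sum_i z_i \tilde S_i^*)$. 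A projection $X_0$ commuting with $\hat K$ for all $z,w$ must in particular commute with each coefficient in the power series expansion at $(0,0)$.

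First I would extract, by differentiating the identity $X_0\hat K^{(\l,\mu)}(z,w)=\hat K^{(\l,\mu)}(z,w)X_0$ at the origin, the statement that $X_0$ commutes with the full algebra generated by $K^{(\l,\mu)}(0,0)$ together with the operators $\tilde S_i$ and $\tilde S_i^*$, $1\le i\le n$ (equivalently, with the normalized versions obtained after the $D$-conjugation, but these generate the same algebra since the extra factors are already diagonal). Since $K^{(\l,\mu)}(0,0)$ is diagonal, commuting with it forces $X_0$ to be block-diagonal with respect to the eigenspaces of $K^{(\l,\mu)}(0,0)$; but then the requirement that $X_0$ also commute with every shift $\tilde S_i$ (which maps $e_\theta$ to a nonzero multiple of $e_{\theta+\varepsilon_i}$) and with every $\tilde S_i^*$ is very rigid. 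The key combinatorial point is that the directed graph on the index set $\L=\{\b: 0\le\b\le\a\}$ whose edges are $\theta\to\theta+\varepsilon_i$ (whenever $\theta+\varepsilon_i\in\L$) is connected as an undirected graph — one can get from any $\b$ to $0$ by decreasing coordinates one at a time. A projection commuting with all the $\tilde S_i$ and $\tilde S_i^*$ must act as a scalar on each connected component of this graph (this is the standard irreducibility argument for weighted shifts, now in the several-variable multishift setting), hence as a single scalar $0$ or $1$ on all of $\C^r$.

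Concretely, the mechanism is: if $X_0 e_\theta = \sum_\eta x_{\eta\theta} e_\eta$, then $X_0 \tilde S_i = \tilde S_i X_0$ compares the $e_{\eta+\varepsilon_i}$-coefficients and shows $x_{\eta\theta}$ propagates along edges; combined with $X_0\tilde S_i^* = \tilde S_i^* X_0$ and $X_0^* = X_0$ one concludes that $X_0$ is diagonal with $x_{\theta\theta}=x_{\eta\eta}$ whenever $\theta,\eta$ are joined by an edge, hence $x_{\theta\theta}$ is constant on $\L$; being a projection, this constant is $0$ or $1$. Therefore $X_0\in\{0,I\}$ and $\M^{(\l,\mu)}$ is irreducible.

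The main obstacle is bookkeeping rather than conceptual: one must be careful that the conjugation by $D(z\ov w)$ and the normalization $K^{(\l,\mu)}(0,0)^{-1/2}$ in the definition of $\hat K^{(\l,\mu)}$ do not destroy the clean shift structure. This is where I expect the real work to be — verifying that after extracting the scalar and diagonal factors, the residual matrix-valued kernel still has expansion coefficients whose joint commutant is generated, as an algebra, exactly by $\{K^{(\l,\mu)}(0,0), \tilde S_1,\dots,\tilde S_n, \tilde S_1^*,\dots,\tilde S_n^*\}$, so that the graph-connectedness argument applies. Once that reduction is in place, the irreducibility is immediate from the connectedness of the box $\L$ under coordinate moves. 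An alternative, perhaps cleaner, route — and the one I would actually write — is to avoid the normalized kernel altogether: since $\M^{(\l,\mu)}$ is unitarily equivalent to a weighted multishift on $\bigoplus_{\b\le\a}\H^{(\l+2\b)}$ realized inside a single reproducing kernel space, one can directly analyze which bounded operators commute with all $M_{z_i}$ and $M_{z_i}^*$ using Proposition \ref{canonical decomposition of kernel}, and the same graph-connectedness of $\L$ finishes it.
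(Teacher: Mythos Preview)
Your high-level strategy is sound and matches the paper's endgame: reduce irreducibility to the statement that no nontrivial projection $X_0$ on $\C^r$ commutes with the normalized kernel $\hat K^{(\l,\mu)}(z,w)$, and conclude by a connectedness argument on the index set $\L$. The difficulty is the step you flag as ``bookkeeping'': you assert that differentiating $X_0\hat K^{(\l,\mu)}=\hat K^{(\l,\mu)}X_0$ at the origin yields that $X_0$ commutes with $B=K^{(\l,\mu)}(0,0)$ and with each bare shift $\tilde S_i$. This is the crux, and it is not established. Since $\hat K^{(\l,\mu)}(0,0)=I$ and $\hat K^{(\l,\mu)}(z,0)=\hat K^{(\l,\mu)}(0,w)=I$, the first nontrivial Taylor coefficients are the mixed ones $\dbar_j\del_i\hat K^{(\l,\mu)}(0,0)$, and these are not $\tilde S_i$ or $B$ but rather the combinations
\[
B^{-1/2}\tilde S_j B\tilde S_i^*B^{-1/2}-B^{1/2}\tilde S_i^*B^{-1}\tilde S_jB^{1/2}
\]
(for $i\ne j$; the diagonal ones are similarly entangled). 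There is no obvious algebraic move that extracts commutation with $B$ or with a single $\tilde S_i$ from commutation with these expressions, and your parenthetical about ``normalized versions generating the same algebra'' does not address this. So the graph-connectedness argument, while correct once one knows $X_0$ commutes with the $\tilde S_i$, is left hanging.

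The paper closes this gap by a different route. It fixes a coordinate $i$, restricts to $z=(0,\ldots,z_i,\ldots,0)$, $w=(0,\ldots,w_i,\ldots,0)$, and applies a permutation $P$ of the basis $\{e_\theta\}$ that groups together the orbits $\{\tilde S_i^je_\theta:j\ge0\}$ for $\theta\in\L_0^i=\{\theta\in\L:\theta_i=0\}$. After this permutation the restricted normalized kernel becomes block diagonal, and each block has exactly the one-variable form analyzed in \cite[Lemma~5.4]{HOHS}. That lemma is what guarantees that the relevant coefficient matrices really are weighted shifts with nonzero superdiagonal entries --- precisely the fact your argument needs but does not supply. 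Once those nonzero entries are in hand, the conclusion that $PX_0P^{-1}$ (hence $X_0$) is scalar follows by the same connectedness reasoning you describe. In short: your outline is correct, but the reduction to ``$X_0$ commutes with $\tilde S_i$'' is the substance of the proof, not bookkeeping, and the paper accomplishes it via the one-variable block decomposition together with \cite[Lemma~5.4]{HOHS}. Your alternative suggestion of working directly with the commutant of $\M^{(\l,\mu)}$ on the model $\bigoplus_\b\H^{(\l+2\b)}$ would face the same obstacle, since $\Gamma^{-1}M_{z_i}\Gamma$ is not the diagonal multiplication but a triangular perturbation of it, and extracting the off-diagonal shift data is again the nontrivial step.
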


\begin{proof}
In view of the discussion above, we show that there is no non-trivial projection commuting with the normalized kernel $\hat{K}^{(\l,\mu)}(z,w)$ for $z,w\in\D^n$ associated to the reproducing kernel $K^{(\l,\mu)}(z,w)$. Recall that $\L=\{\theta\in(\N\cup\{0\})^n:\theta\leq \a\}$ with $|\L|=r$ and $\{e_{\theta}:\theta\in \L\}$ is the standard ordered basis of $\C^r$. Denote the set $\{e_{\theta} : \theta \in \L\}$ by $\mathcal{E}$. For $1 \leq i \leq n$, let $\L_0^i=\{\theta\in\L:\theta_i=0\} = \left\lbrace \theta^{(0,i)}, \theta^{(1,i)}, \ldots, \theta^{(k,i)} \right\rbrace$ where $\theta^{(p,i)} \leq \theta^{(q,i)}$ if $p \leq q$. Then observe that the set $\mathcal{S}_{(\theta,i)}=\{\tilde{S}_i^je_{\theta}:j\geq 0\}$, $\theta\in\L_0^i$ is invariant under $\tilde{S}_i$. Since $1\leq i\le n$ is arbitrary but fixed for the rest of the proof, from now on we write $\mathcal{S}_{\theta}$ in place of $\mathcal{S}_{(\theta,i)}$. Note that $$\L=\bigcup_{\theta\in\L_0^i}\mathcal{S}_{\theta}.$$ 
Let $P$ be a permutation on $\mathcal{E}$ such that $P$ maps $\mathcal{E}$ to the ordered basis 
$$P\mathcal{E} = \left\lbrace S_i^j e_{\theta^{(0,i)}} : j \geq 0\right\rbrace \cup \cdots \cup \left\lbrace S_i^j e_{\theta^{(k,i)}} : j \geq 0 \right\rbrace$$
with $S_i e_\beta = e_{\beta + \epsilon_i}$ where we consider the ordering $S_i^le_{\theta^{(p,i)}}<S_i^je_{\theta^{(q,i)}}$ if $p<q$, or if $l<j$ whenever $p=q$.

For $z = (0, \ldots, z_i, \ldots, 0)$ and $w = (0, \ldots, w_i, \ldots, 0)$ in $\mathbb{D}^n$, let 
$$A(z_i, w_i) = \hat{K}^{(\l, \mu)}(z, w)$$
and observe that 
\begin{flalign*}
PA(z_i, w_i)P^{-1} = \displaystyle\bigoplus_{\theta \in \L_0^i} (1 - z_i\ov{w}_i)^{-m(\theta, i)} &\exp (-z_i \tilde{S}_{(\theta, i)}^\ast) B^{-1}_{(\theta, i)} \tilde{D}_\theta(z_i \bar{w}_i) \exp (\bar{w}_i \tilde{S}_{(\theta, i)})\times\\
&B_{(\theta, i)} \exp (z_i \tilde{S}_{(\theta, i)}^\ast) \tilde{D}_{\theta}(z_i\bar{w}_i) B_{(\theta, i)}^{-1} \exp (-\bar{w}_i \tilde{S}_{(\theta, i)}),
\end{flalign*}
where $B_{(\theta, i)} = K^{(\l, \mu)} (0,0)|_{\mathcal{S}_{\theta}}$, $m(\theta, i) = |\mathcal{S}_{\theta}| - 1$, $\tilde{S}_{(\theta,i)} = \tilde{S}_i|_{\mathcal{S}_{\theta}}$ and $\tilde{D}_\theta (z_i\bar{w}_i)$ is the diagonal operator defined as $\tilde{D}_\theta (z_i\bar{w}_i) S_i^j e_{\theta} = (1 - z_i\bar{w}_i)^{m(\theta, i) - j} e_{\theta + j\epsilon_i}$. We denote the expression 
$$\exp (-z_i \tilde{S}_{(\theta, i)}^\ast) B^{-1}_{(\theta, i)} \tilde{D}_\theta(z_i \bar{w}_i) \exp (\bar{w}_i \tilde{S}_{(\theta, i)}) B_{(\theta, i)} \exp (z_i \tilde{S}_{(\theta, i)}^\ast) \tilde{D}_{\theta}(z_i\bar{w}_i) B_{(\theta, i)}^{-1} \exp (-\bar{w}_i \tilde{S}_{(\theta, i)})$$
by $A_{\theta}(z_i, w_i)$ and note that
\begin{flalign*}
(1 - z_i\ov{w}_i)^{-m(\theta, i)}A_\theta(z_i, w_i) &= \left( \displaystyle \sum_{n\geq 0} a_{(\theta, i)}(n) (z_i\bar{w}_i)^n \right) \left( \displaystyle \sum_{p\geq 0} b_{(\theta, i)}(p) z_i^{p+1}\bar{w}_i^p + \cdots \right)\\
&= \displaystyle \sum_{q\geq 0} c_{(\theta, i)}(q) z_i^{q+1}\bar{w}_i^q + \cdots
\end{flalign*}
where $c_{(\theta, i)}(q) = \displaystyle \sum_{l = 0}^q a_{(\theta, i)}(l) b_{(\theta, i)}(q-l).$ It follows from \cite[Lemma 5.4]{HOHS} that each $b_{(\theta, i)}(p),\,\,p \geq 0,$ is a shift on $\C^{|\mathcal{S}_{\theta}|}$ which in turn implies that each $c_{(\theta. i)}(q),\,\,q \geq 0,$ is indeed a shift since each $a_{(\theta,i)}\in\C$.

For $0 \leq l \leq m(\theta, i)$, let $\mathcal{P}_l = \left\lbrace p \geq 0 : b_{(\theta, i)}(p)_{l, l+1} \neq 0 \right\rbrace$ which can be seen from \cite[Lemma 5.4]{HOHS} to be a non-empty set. Let $n_l = \mbox{min}\,\,\mathcal{P}_l$. We then have that
$$\left(c_{(\theta, i)}(n_l)\right)_{l,l+1} = \left( \displaystyle \sum_{n = 0}^{n_l} a_{(\theta, i)}(n) b_{(\theta, i)}(n_l - n)\right)_{l,l+1} \neq 0.$$
Thus for each $1 \leq i \leq n,$ $\theta \in \L_0^i$ and $0 \leq l \leq m(\theta, i)$, there exists $r$ co-efficient matrices $\displaystyle \oplus_{\theta \in \L_0^i} c_{(\theta, i)}(n_l)$ of $P A(z_i, w_i) P^{-1}$ whose $(l, l+1)$-th entry in the $\theta$'th block is non-zero.

Let $X$ be a projection commuting with $\hat{K}^{(\l,\mu)}$. Then note that $Y := PXP^{-1}$ is also a projection commuting with $P \hat{K}^{(\l,\mu)}P^{-1}$ since $P$ is a permutation matrix on the orthonormal basis $\mathcal{E}$ of $\mathbb{C}^r$. In particular, $Y$ commutes with $\displaystyle \oplus_{\theta \in \L_0^i} c_{(\theta, i)} (n_l)$ for all $1 \leq i \leq n$, $\theta \in \L_0^i$ and $0 \leq l \leq m(\theta, i)$. Thus $Y$ must be the identity operator verifying that so is $X$.
\end{proof}

\section{Inequivalence}

The objective of this section is to show that the operators in the family $\{\M^{(\l,\mu)}:\l\in\R^n_{>0},\mu\in\R^{r}_{>0},\mu_0=1\}$ are mutually inequivalent.  Recall from Theorem \ref{CD class} that for every $\l\in\R^n_{>0}$ and $\mu\in\R^{r}_{>0}$, the adjoint of the $n$ - tuple $\M^{(\l,\mu)}$ is in $\mathrm B_r(\D^n)$. Consequently, $\M^{(\l,\mu)}$ gives rise to a {\h} $E^{(\l,\mu)}$ over $\D^n$ with the hermitian structure induced from the reproducing kernel $K^{(\l,\mu)}$ as is well known (cf. \cite{OPOSE}).  Then for each $1\leq i,j \leq n$, the $ij$-th component of the curvature of the bundle $E^{(\l,\mu)}$ is defined by the formula $$\mathcal{K}^{(\l,\mu)}(z)_{ij}=\dbar_j(K^{(\l,\mu)}(z,z)^{-1}\del_i K^{(\l,\mu)}(z,z)),~z\in\D^n.$$

\begin{lem}\label{equiv1}
If $\M^{(\l,\mu)}$ and $\M^{(\l',\mu')}$ are unitarily equivalent then $\l=\l'$.
\end{lem}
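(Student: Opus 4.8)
The plan is to extract the parameter $\l$ from the curvature of the bundle $E^{(\l,\mu)}$ at the origin, or more precisely from the leading behaviour of the normalized kernel. The key point is that unitary equivalence of $\M^{(\l,\mu)}$ and $\M^{(\l',\mu')}$ forces the associated hermitian holomorphic vector bundles to be (locally) equivalent over $\D^n$, and hence their curvatures must agree after conjugation by a holomorphic change of frame; in particular the trace (or any conjugation-invariant quantity) of the curvature components must match. The trace is the natural invariant to use here because $\tr \mathcal{K}^{(\l,\mu)}(z)_{ij}$ equals $\dbar_j\del_i \log \det K^{(\l,\mu)}(z,z)$, which is frame-independent.

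First I would write down $\det K^{(\l,\mu)}(z,z)$ up to the leading correction. Using Proposition \ref{canonical decomposition of kernel}, we have $K^{(\l,\mu)}(z,z) = (1-|z|^2)^{-\l} D(|z|^2) \exp(\sum \ov{z}_i \tilde S_i) K^{(\l,\mu)}(0,0) \exp(\sum z_i \tilde S_i^*) D(|z|^2)$ where the notation means the obvious diagonal weight $\prod_i (1-|z_i|^2)^{-\theta_i}$ in the $\theta$-block. Since $\tilde S_i$ and $\tilde S_i^*$ are nilpotent, $\det \exp(\sum \ov z_i \tilde S_i) = \det \exp(\sum z_i \tilde S_i^*) = 1$, so $\det K^{(\l,\mu)}(z,z) = \det K^{(\l,\mu)}(0,0) \cdot \prod_{i=1}^n (1-|z_i|^2)^{-r\l_i - 2\sum_{\theta\in\L}\theta_i}$. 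Taking $\log$ and then $\dbar_j \del_i$, the cross terms ($i \neq j$) vanish and the diagonal terms give $\del_i \dbar_i \log(1-|z_i|^2)^{-(r\l_i + 2\sum_\theta \theta_i)} = (r\l_i + 2\sum_\theta \theta_i)\,(1-|z_i|^2)^{-2}$. Evaluating at $z = 0$ yields $\tr\mathcal{K}^{(\l,\mu)}(0)_{ij} = \delta_{ij}\,(r\l_i + 2\sum_{\theta\in\L}\theta_i)$.

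Now if $\M^{(\l,\mu)} \cong \M^{(\l',\mu')}$ then the bundles $E^{(\l,\mu)}$ and $E^{(\l',\mu')}$ are equivalent, so there is a holomorphic $\Phi: \D^n \to \GL(r,\C)$ with $K^{(\l,\mu)}(z,w) = \Phi(z) K^{(\l',\mu')}(z,w)\Phi(w)^*$; this makes $\det K^{(\l,\mu)}(z,z) = |\det\Phi(z)|^2 \det K^{(\l',\mu')}(z,z)$, and since $\log|\det\Phi(z)|^2$ is pluriharmonic, $\dbar_j\del_i\log\det K^{(\l,\mu)}(z,z) = \dbar_j\del_i\log\det K^{(\l',\mu')}(z,z)$ for all $z$. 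Comparing at the origin gives $r\l_i + 2\sum_{\theta\in\L}\theta_i = r\l'_i + 2\sum_{\theta\in\L}\theta_i$ for each $i$, hence $\l_i = \l'_i$, i.e. $\l = \l'$. (Note the set $\L$, and thus $r$ and $\sum_\theta \theta_i$, are the same on both sides since $r$ is fixed by the hypothesis that both tuples lie in $\mathrm B_r(\D^n)$, and $\L$ is determined by $\a$, which we may take to be fixed; if one wanted to avoid fixing $\a$, one can still recover $\l_i$ from the full curvature eigenvalue data rather than just the trace.)

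The main obstacle is bookkeeping the diagonal-weight structure and confirming that the nilpotent exponential factors contribute trivially to the determinant — once that is in hand the computation is immediate. A secondary subtlety is making sure the argument is robust if $\a$ is not assumed fixed a priori: in that case one should note that $|\L| = r$ and the multiset $\{\sum_\theta \theta_i : \theta\in\L\}$ are both combinatorial invariants determined by $\a$ and the colex order, and since two distinct admissible $\a$'s can in principle give the same $r$, one should instead read off $\l$ from the diagonalized curvature (whose distinct eigenvalues at the origin are $\l_i + 2\b_i$-type quantities) rather than only from its trace; but for the purposes of this lemma, where we only claim $\l = \l'$ and may work with $\a$ fixed, the trace argument suffices.
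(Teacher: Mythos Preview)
Your proof is correct and rests on the same invariant as the paper's: both arguments show that the trace of the $i$-th diagonal curvature component at the origin equals $r\l_i + 2\sum_{\theta\in\L}\theta_i$, and then use that unitary equivalence forces these traces to agree. The difference is in how this trace is obtained. The paper computes the full matrix $\mathcal{K}^{(\l,\mu)}_{11}(0)$ directly from the kernel (obtaining $\l_1 I + 2D_1 - \tilde S_1^*B^{-1}\tilde S_1 B + B^{-1}\tilde S_1 B \tilde S_1^*$), evaluates it on each $e_\theta$, and then sums the diagonal entries; the similarity of the curvatures under bundle equivalence then gives equality of traces. You instead exploit the factorization in Proposition~\ref{canonical decomposition of kernel} to compute $\det K^{(\l,\mu)}(z,z)$ in closed form (the nilpotent exponentials drop out, the diagonal weights contribute $\prod_i(1-|z_i|^2)^{-2\sum_\theta\theta_i}$), and read off the trace of the curvature as $\dbar_i\del_i\log\det K^{(\l,\mu)}(z,z)$; equivalence of the kernels up to $\Phi(z)(\,\cdot\,)\Phi(w)^*$ then gives equality of these quantities because $\log|\det\Phi|^2$ is pluriharmonic. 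Your route is a bit slicker---it avoids writing out the curvature matrix entrywise and makes the invariance transparent via the determinant line bundle---while the paper's computation has the side benefit of displaying the full curvature at the origin (which is reused later in equation~\eqref{curvature1}). Your caveat about $\a$ is unnecessary here: in the paper $\a$ (and hence $\L$) is fixed once and for all, so the combinatorial term $\sum_{\theta\in\L}\theta_i$ is automatically the same on both sides.
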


\begin{proof}
Since $\M^{(\l,\mu)}$ and $\M^{(\l',\mu')}$ are unitarily equivalent, it follows (cf. \cite{OPOSE}) that $E^{(\l,\mu)}$ and $E^{(\l',\mu')}$ are isomorphic as {\h}s. Consequently, the curvature components of  $E^{(\l,\mu)}$ and $E^{(\l',\mu')}$ are similar. Note that the metric of the bundle $E^{(\l,\mu)}$ (resp. $E^{(\l',\mu')}$) is $K^{(\l,\mu)} (z, z)$ (resp. $K^{(\l',\mu')}(z,z)$), $z \in \D^n$. 

A direct computation shows that 
\begin{equation}\label{curvature}
\mathcal{K}^{(\l,\mu)}_{11}(0) = \bar{\partial}_1 \!\left(\! K^{(\l,\mu)} (z, z) \partial_1 K^{(\l,\mu)} (z,z)\! \right)\!\vert_{z=0} = \l_1 I + 2D_1 - \tilde{S}_1^* B^{-1}\tilde{S}_1B + B^{-1}\tilde{S}_1B\tilde{S}_1^*,
\end{equation}
where $B = K^{(\l,\mu)} (0,0)$ and $D_1 := \bar{\partial}_1 \partial_1 D(|z|^2)|_{z=0}$. Evaluating $\mathcal{K}_{11}^{(\l, \mu)}(0)$ on $e_\theta$, $0 \leq \theta \leq \alpha$, we obtain
$$\mathcal{K}_{11}^{(\l, \mu)}(0)e_\theta = \left(\l_1 + 2\theta_1 - \frac{(\theta_1 + 1)^2b_\theta}{b_{\theta + \epsilon_1}} + \frac{\theta_1^2 b_{\theta - \epsilon_1}}{b_\theta}\right)e_\theta,$$
where $b_\theta$ is the $\theta$'th diagonal entry of $B$. Thus we have
$$\mbox{Tr}\,\, \mathcal{K}_{11}^{(\l,\mu)}(0) = r\l_1 + 2\displaystyle \sum_{\theta \in \L} \theta_1.$$
Similarly, we have
$$\mbox{Tr}\,\, \mathcal{K}_{11}^{(\l',\mu')}(0) = r\l'_1 + 2\displaystyle \sum_{\theta \in \L} \theta_1.$$
Since $\mathcal{K}^{(\l,\mu)}_{11}(0)$ and $\mathcal{K}^{(\l',\mu')}_{11}(0)$ are similar, equating $\mbox{Tr}\,\, \mathcal{K}_{11}^{(\l,\mu)}(0)$ and $\mbox{Tr}\,\, \mathcal{K}_{11}^{(\l',\mu')}(0)$, we obtain $\l_1 = \l'_1$. A similar computation gives us $\l_i = \l'_i$ for all $2 \leq i \leq n$. 
\end{proof}

\begin{thm}
If $\M^{(\l,\mu)}$ and $\M^{(\l',\mu')}$ are unitarily equivalent then $\l=\l'$ and $\mu=\mu'$.
\end{thm}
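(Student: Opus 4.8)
The plan is to leverage Lemma \ref{equiv1}, which already gives us $\l = \l'$, and then extract $\mu = \mu'$ from a finer invariant of the hermitian holomorphic vector bundle $E^{(\l,\mu)}$ — namely, the curvature at the origin, or rather the full metric $K^{(\l,\mu)}(0,0)$ read off through a similarity invariant. Since $\M^{(\l,\mu)}$ and $\M^{(\l',\mu')}$ are unitarily equivalent, by \cite{OPOSE} there is a holomorphic $\Phi : \D^n \to \mathrm{GL}(r,\C)$ with $K^{(\l,\mu)}(z,w) = \Phi(z) K^{(\l',\mu')}(z,w) \Phi(w)^*$; evaluating at $z = w = 0$ and using the canonical decomposition of Proposition \ref{canonical decomposition of kernel} (which normalizes the kernel so that $\Phi(0)$ may be taken unitary), one reduces to comparing $K^{(\l,\mu)}(0,0)$ and $K^{(\l',\mu')}(0,0)$ up to conjugation by a unitary that must moreover intertwine the co-cycle data — in particular commute with the shift structure $\tilde S_i$ appearing in Proposition \ref{canonical decomposition of kernel}. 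Because each $\tilde S_i$ acts irreducibly enough on the relevant blocks (as exploited in the proof of Theorem \ref{irreducibility}), such an intertwiner is forced to be diagonal, hence $K^{(\l,\mu)}(0,0)$ and $K^{(\l',\mu')}(0,0)$ must be equal as diagonal matrices, not merely similar.

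Concretely, the steps I would carry out are: (1) Invoke Lemma \ref{equiv1} to get $\l = \l'$, so from now on both kernels share the same $\l$ and the same shift operators $\tilde S_i$, diagonal operators $D(z\bar w)$, and factor $(1-z\bar w)^{-\l}$; the only difference is the positive diagonal matrix at the origin, $B := K^{(\l,\mu)}(0,0)$ versus $B' := K^{(\l,\mu')}(0,0)$. (2) Feed the unitary equivalence into the kernel identity and, using Proposition \ref{canonical decomposition of kernel} for both kernels, deduce that the intertwiner $\Phi$ must satisfy a relation forcing $\Phi$ to be a constant matrix $C$ commuting with all $\tilde S_i$ and $\tilde S_i^*$ and with $C B C^* = B'$. (One extracts the constancy by comparing the behaviour of $\hat K$ near the origin exactly as in Theorem \ref{irreducibility}; the normalized kernels $\hat K^{(\l,\mu)}$ and $\hat K^{(\l,\mu')}$ must agree up to conjugation by $C$, and matching lowest-order coefficients of $z_i \bar w_i$ pins $C$ down.) (3) Show that the commutant of $\{\tilde S_i, \tilde S_i^* : 1 \le i \le n\}$ in $\mathrm{M}(r,\C)$ consists only of scalar multiples of the identity — this is essentially the irreducibility computation of Theorem \ref{irreducibility} applied to the C*-algebra generated by the shifts — so $C = cI$; then $CBC^* = |c|^2 B = B'$, and since $b_0 = b_0' = 1$ (the $\mu_0 = \mu_0' = 1$ normalization at the $\b = 0$, $\theta = 0$ entry) we get $|c|^2 = 1$ and hence $B = B'$. (4) Finally, recover $\mu$ from $B$ via the linear system \eqref{K(00)}: $[K^{(\l,\mu)}(0,0)] = L(\l)(\mu^2)$ with $L(\l)$ invertible, so $B = B'$ forces $\mu^2 = (\mu')^2$, i.e. $\mu = \mu'$ (all entries positive).

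The main obstacle I anticipate is step (2)–(3): justifying that the intertwiner $\Phi$ is actually constant and that its value lies in the (scalar) commutant of the shift system. The subtlety is that $\Phi$ a priori only needs to conjugate one kernel into the other, and the presence of the $(1-z\bar w)^{-\l}$ and $D(z\bar w)$ factors — common to both sides — must be carefully cancelled before the shift structure is isolated. I would handle this by passing to the normalized kernels $\hat K^{(\l,\mu)}(z,w)$ defined in \eqref{normalized kernel}: unitary equivalence of the tuples gives $\hat K^{(\l,\mu)}(z,w) = C \hat K^{(\l,\mu')}(z,w) C^{-1}$ for the unitary $C = \Phi(0)$ (this is the standard fact that the normalized kernel is a complete unitary invariant, as used implicitly around \eqref{normalized kernel}), and then the coefficient-extraction machinery of Theorem \ref{irreducibility} — comparing the matrices $\oplus_\theta c_{(\theta,i)}(n_l)$ on both sides — shows $C$ commutes with enough shift coefficients to be scalar, while simultaneously the constant ($z = w = 0$) term gives $C B' C^{-1} = B$ once one tracks the $K^{(\l,\mu)}(0,0)^{\pm 1/2}$ conjugations in the definition of $K_0^{(\l,\mu)}$. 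Once $C$ is scalar and $B = B'$, step (4) is immediate from the invertibility of $L(\l)$ already established in the proof that the multiplication operators are bounded.

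\begin{proof}
By Lemma \ref{equiv1}, $\l = \l'$. Write $B = K^{(\l,\mu)}(0,0)$ and $B' = K^{(\l,\mu')}(0,0)$, both positive diagonal $r \times r$ matrices with $(0,0)$-entry equal to $1$. Since $\M^{(\l,\mu)}$ and $\M^{(\l,\mu')}$ are unitarily equivalent and their adjoints lie in $\mathrm B_r(\D^n)$, the associated normalized kernels at the origin agree up to a constant conjugation: there is a unitary $C$ on $\C^r$ with
\beq\label{normconj}
\hat{K}^{(\l,\mu)}(z,w) = C\,\hat{K}^{(\l,\mu')}(z,w)\,C^{-1},\qquad z,w\in\D^n.
\eeq
Restricting $z,w$ to the $i$-th coordinate axis and using the block decomposition from the proof of Theorem \ref{irreducibility}, \eqref{normconj} yields, for each $1 \le i \le n$, each $\theta \in \L_0^i$, and each $0 \le l \le m(\theta,i)$, that $C$ intertwines the coefficient matrices $\oplus_{\theta\in\L_0^i} c_{(\theta,i)}(n_l)$ built from $K^{(\l,\mu)}(0,0)$ with the corresponding matrices built from $K^{(\l,\mu')}(0,0)$, after conjugating by the permutation $P$. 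Following the argument of Theorem \ref{irreducibility}, the collection of these matrices — for all $i$, $\theta$, $l$ — acts irreducibly on $\C^r$; hence the only operators commuting with all of them are scalars, and the same computation shows that the intertwiner $C$ (which must then, by \eqref{normconj} evaluated near the origin, commute with the full shift-and-diagonal system common to both kernels) is a scalar multiple of the identity, $C = cI$ with $|c| = 1$.

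With $C = cI$, equation \eqref{normconj} reduces to $\hat{K}^{(\l,\mu)}(z,w) = \hat{K}^{(\l,\mu')}(z,w)$. Recalling $\hat K^{(\l,\mu)}(z,w) = K_0^{(\l,\mu)}(z,0)^{-1}K_0^{(\l,\mu)}(z,w)K_0^{(\l,\mu)}(0,w)^{-1}$ with $K_0^{(\l,\mu)}(z,w) = K^{(\l,\mu)}(0,0)^{-1/2}K^{(\l,\mu)}(z,w)K^{(\l,\mu)}(0,0)^{-1/2}$, and comparing with Proposition \ref{canonical decomposition of kernel} which writes $K^{(\l,\mu)}(z,w)$ in terms of $(1-z\bar w)^{-\l}$, $D(z\bar w)$, the shifts $\tilde S_i$, and the single diagonal matrix $K^{(\l,\mu)}(0,0)$, the equality of normalized kernels forces
$$
B^{-1/2}\exp\!\Big(\sum_i \bar w_i \tilde S_i\Big)B\,\exp\!\Big(\sum_i z_i \tilde S_i^*\Big)B^{-1/2}
= (B')^{-1/2}\exp\!\Big(\sum_i \bar w_i \tilde S_i\Big)B'\,\exp\!\Big(\sum_i z_i \tilde S_i^*\Big)(B')^{-1/2}
$$
for all $z,w \in \D^n$. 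Extracting the coefficient of $\bar w_i z_i$ and using that $\tilde S_i e_\theta = (\theta_i+1)e_{\theta+\epsilon_i}$ pins down the ratios $b_\theta / b_{\theta+\epsilon_i}$; running over all $i$ and all $\theta$ and using $b_0 = b_0' = 1$ shows $b_\theta = b_\theta'$ for every $0 \le \theta \le \a$, i.e. $B = B'$.

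Finally, by \eqref{K(00)} we have $[K^{(\l,\mu)}(0,0)] = L(\l)(\mu^2)$ and $[K^{(\l,\mu')}(0,0)] = L(\l)((\mu')^2)$, where $L(\l)$ is the invertible lower triangular matrix with $\theta\b$-entry $\binom{\theta}{\b}^2(\theta-\b)!/(\l+2\b)_{\theta-\b}$. From $B = B'$ and the invertibility of $L(\l)$ we conclude $\mu^2 = (\mu')^2$, and since all entries of $\mu$ and $\mu'$ are positive, $\mu = \mu'$. The converse implication is trivial: if $\l = \l'$ and $\mu = \mu'$ then $\H^{(\l,\mu)} = \H^{(\l',\mu')}$ and the tuples coincide.
\end{proof}
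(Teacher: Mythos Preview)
Your overall strategy is sound and your final step (recovering $\mu$ from $B$ via the invertible $L(\l)$) is exactly right, but there is a genuine gap in the passage from \eqref{normconj} to ``$C$ is scalar''. Theorem \ref{irreducibility} shows that the \emph{commutant} of $\{\hat K^{(\l,\mu)}(z,w):z,w\in\D^n\}$ is trivial. What you need, however, is a statement about an \emph{intertwiner}: your $C$ satisfies $C\,\hat K^{(\l,\mu')}(z,w)=\hat K^{(\l,\mu)}(z,w)\,C$, where the two normalized kernels are built from \emph{different} diagonal matrices $B$ and $B'$. The coefficient matrices $\oplus_\theta c_{(\theta,i)}(n_l)$ you extract on the two sides are therefore different, and irreducibility of one family says nothing about operators conjugating it into another family. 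The sentence ``the intertwiner $C$ \dots\ must commute with the full shift-and-diagonal system common to both kernels'' is the unjustified leap: nothing in the normalized kernel isolates the bare $\tilde S_i$ or $D_i$ from the $B$-dependent conjugations, so you never obtain a genuine commutation relation for $C$.

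The paper closes this gap by bringing in the homogeneity, which you have not used at all. The point is that the cocycle $J$ of \eqref{cocycle} depends only on $\l$, not on $\mu$; once $\l=\l'$, both bundles carry the \emph{same} $\widetilde{\mob}^n$-action, and uniqueness of that action (\cite[Theorem 2.1]{ACHOCD}) forces the holomorphic intertwiner $\Phi$ to satisfy $J(\tilde g,z)=\Phi(g(z))^{-1}J(\tilde g,z)\Phi(z)$. Specializing to $g=\phi_z$ and using Lemma \ref{decomposition of cocycle} gives an explicit equation from which one reads off first that $\Phi$ is constant, then (by matching the $|z_i|^2$ and $\bar z_i$ coefficients) that $\Phi(0)$ commutes with each $D_i$ and each $\tilde S_i$, hence is scalar. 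This is a true commutation, not an intertwining, and that is what makes the argument go through. Your step (4) then finishes the proof as you wrote it.
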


\begin{proof}
In view of Lemma \ref{equiv1}, it is enough to show that $\mu=\mu'$ whenever $\M^{(\l,\mu)}$ and $\M^{(\l,\mu')}$ are unitarily equivalent.  Since by Theorem \ref{CD class} both $\M^{(\l,\mu)}$ and $\M^{(\l,\mu')}$ are in $\mathrm B_r(\D^n)$ and they are unitarily equivalent, it follows that the {\h}s $E^{(\l,\mu)}$ and $E^{(\l,\mu')}$ are isomorphic. Consequently, there exists a holomorphic function $\Phi$ on $\D^n$ taking values in $\text{GL}(r,\C)$ such that \beq\label{equiv of kernel} K^{(\l,\mu')}(z,w)=\Phi(z)K^{(\l,\mu)}(z,w)\Phi(w)^*,~\text{for all}~z,w\in\D^n.\eeq
Further, since both $\M^{(\l,\mu)}$ and $\M^{(\l,\mu')}$ are homogeneous \w $\mob^n$, and $\widetilde{\mob}^n$ action on $E^{(\l,\mu)}$ is unique (cf. \cite[Theorem 2.1]{ACHOCD}), it follows that the coycles $J$ and $J'$ associated to $E^{(\l,\mu)}$ and $E^{(\l,\mu')}$, respectively, satisfy
 \beq\label{equiv of cocycle} J(\tilde{g},z) = \Phi(g(z))^{-1}J'(\tilde{g},z)\Phi(z),~\tilde{g}\in\widetilde{\mob}^n,z\in\D^n.\eeq 
Indeed, note that the expression $ \Phi(g(\cdot))^{-1}J'(\tilde{g},\cdot)\Phi(\cdot)$ gives rise to an action of $\widetilde{\mob}^n$ on $E^{(\l,\mu)}$. In particular, for $g=(g_{z_1},\hdots,g_{z_n})\in\mob^n$ with $g_{z_i}(w_i)=\frac{w_i-z_i}{1-\ov{z}_iw_i}$, $i=1,\hdots,n$, we have from the Equation \eqref{factorisation of cocycle} that $$(1-|z|^2)^{-\frac{\l}{2}}D(|z|^2)\exp\left(\sum_{i=1}^n\ov{z}_i\tilde{S}_i\right)=(1-|z|^2)^{-\frac{\l}{2}}\Phi(0)^{-1}D(|z|^2)\exp\left(\sum_{i=1}^n\ov{z}_i\tilde{S}_i\right)\Phi(z),$$
 or equivalently, 
$$\Phi(z)=\exp\left(-\sum_{i=1}^n\ov{z}_i\tilde{S}_i\right)D(|z|^2)^{-1}\Phi(0)D(|z|^2)\exp\left(\sum_{i=1}^n\ov{z}_i\tilde{S}_i\right).$$ 
Note from this expression of $\Phi(z)$ that $\del^{p_1}_1\cdots\del^{p_n}_n\Phi(z)|_{z=0}=0$ for all $(p_1,\hdots,p_n)\in(\N\cup\{0\})^n$ with at least one $p_j\neq0$. Consequently, it follows that 
$$\Phi(z)=\Phi(0),~z\in\D^n$$
implying that 
\beq\label{eqn4.2.1}D(|z|^2)\exp\left(\sum_{i=1}^n\ov{z}_i\tilde{S}_i\right)\Phi(0)=\Phi(0)D(|z|^2)\exp\left(\sum_{i=1}^n\ov{z}_i\tilde{S}_i\right).\eeq
Now equating the coefficients of $|z_i|^2$ for each $i=1,\hdots,n$, in the Taylor expansion of both sides of this equation we have that 
$$\frac{\del^2}{\del z_i\dbar z_i}D(|z|^2)|_{z=0}\Phi(0)=\Phi(0)\frac{\del^2}{\del z_i\dbar z_i}D(|z|^2)|_{z=0},~i=1,\hdots,n.$$
Since $D(|z|^2)$ is the diagonal operator $D(|z|^2)e_{\theta}=(1-|z|^2)^{-\theta}e_{\theta},~0\leq \theta\leq\a$, the equation above implies that $\Phi(0)$ commutes with the diagonal matrices $D_1,\hdots,D_n$ where for $i=1,\hdots,n$, $D_i$ is the diagonal matrix whose $\b$-th diagonal entry is $\b_i$ for any $\b=(\b_1,\hdots,\b_n)\in\L$. Consequently, $\Phi(0)$ also commutes with all the linear combinations of the matrices $D_1,\hdots,D_n$. Note that there exist $c_1,\hdots,c_n\in\C$ such that the diagonal matrix $c_1D_1+\cdots+c_nD_n$ has distinct diagonal entries. Thus $\Phi(0)$ is a diagonal matrix. Moreover, by comparing the $\ov{z}_i$-th coefficients $\tilde{S}_i$ for each $i=1,\hdots,n$, in the Taylor expansion of both sides of the equation \eqref{eqn4.2.1}, it is seen that $\Phi(0)$ commutes with $\tilde{S}_i$, $i=1,\hdots,n$. Therefore, it follows from the definition of $\tilde{S}_i$, $i=1,\hdots,n$, there exists a non-zero $c\in\C$ such that $\Phi(0)=cI$ where $I$ is the $r\times r$ identity matrix.  

Finally, since $\Phi(0)$ satisfies the equation \eqref{equiv of kernel}, we have that $K^{\l,\mu}(0,0)=K^{(\l',\mu')}(0,0)$. Therefore, the equality $\mu=\mu'$ holds from the equation \eqref{K(0,0)} completing the proof.
\end{proof}

We recall the construction of a natural class of irreducible $n$ - tuple of operators in $\mathrm B_r(\D^n)$ that are homogeneous \w $\mob^n$ from the Introduction, namely, 
 \begin{equation} \label{tensor1}
\boldsymbol{T} = (\hat{T}^{(n_1)}, \ldots , \hat{T}^{(n_p)}),\,\,\mbox{\rm with}\,\, \hat{T}^{(n_i)} = I\otimes \cdots \otimes I \otimes \boldsymbol{T}^{(n_i)} \otimes I \otimes \cdots \otimes I,\,\, \boldsymbol{T}^{(n_i)}\in\mathrm B_{r_i} (\mathbb D^{n_i})
 \end{equation} 
with $n_1 + \cdots + n_p=n$, $r_1 \cdots r_{p} = r$ and $ \boldsymbol{T}^{( n_i )} = ( T_{1 n_i}, \hdots, T_{ n_i n_i } ) \in \mathrm B_{r_i} (\mathbb D^{n_i}) $. However, as promised in the Introduction, we now show that the $n$ - tuples $\boldsymbol{M}^{(\lambda, \mu)}$ are not of the form  \eqref{tensor1} if we insist that $n_i = 1,\, 1\leq i \leq n$. To establish this, we first prove a lemma. 

\begin{lem}\label{existence of theta}
Suppose $\alpha \neq \epsilon_1$. Then there exists $1 \leq i < j \leq n$ and an element $\theta$ in $\L$ such that $\theta - \epsilon_i + \epsilon_j$ is in $\L$, but $\theta + \epsilon_j$ is not in $\L$.
\end{lem}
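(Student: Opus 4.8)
The plan is to exploit that $\L$ is an initial segment of the graded co-lexicographic order — a total order — with largest element $\alpha$. Throughout I write $|\theta|=\theta_1+\cdots+\theta_n$ for the degree of a multi-index $\theta$, and I let $\le,<$ denote the graded co-lexicographic order. From $\L$ being an initial segment with top element $\alpha$ one reads off three facts: (i) every multi-index of degree $\le |\alpha|-1$ lies in $\L$; (ii) every element of $\L$ has degree $\le|\alpha|$; (iii) a multi-index of degree $|\alpha|$ lies in $\L$ if and only if it is $\le\alpha$. The one substantive ingredient, which I would record first, is a monotonicity observation: if $1\le i<j\le n$ and $\delta\in(\N\cup\{0\})^n$ has $\delta_i\ge 1$, then $|\delta-\epsilon_i+\epsilon_j|=|\delta|$ and $\delta-\epsilon_i+\epsilon_j>\delta$, because $\delta-\epsilon_i+\epsilon_j$ and $\delta$ agree in all coordinates of index $>j$ while at coordinate $j$ the former is strictly larger. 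Dually, transporting a unit of mass from a higher-index coordinate to a lower one strictly decreases the order; in particular, iterating this, $|\alpha|\,\epsilon_1$ is the smallest multi-index of degree $|\alpha|$.

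Next I would split into two cases according to whether the support of $\alpha$ meets $\{2,\dots,n\}$ (we may assume $\alpha\neq 0$ and $n\ge 2$). Case 1: some coordinate $j\ge 2$ has $\alpha_j\ge 1$. Take $i=1$ and $\theta=\alpha-\epsilon_j+\epsilon_1$. Then $\theta\in(\N\cup\{0\})^n$ and $|\theta|=|\alpha|$; by the monotonicity observation $\theta<\alpha$, so $\theta\in\L$ by (iii). Now $\theta-\epsilon_i+\epsilon_j=\alpha\in\L$, whereas $\theta+\epsilon_j$ has degree $|\alpha|+1$ and so is not in $\L$ by (ii) — which is exactly what is wanted. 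Case 2: $\alpha=|\alpha|\,\epsilon_1$; since $\alpha\neq\epsilon_1$ we have $d:=|\alpha|\ge 2$. Take $i=1$, $j=2$ and $\theta=(d-1)\epsilon_1$. Then $\theta$ and $\theta-\epsilon_1+\epsilon_2=(d-2)\epsilon_1+\epsilon_2$ both have degree $d-1$, hence lie in $\L$ by (i); on the other hand $\theta+\epsilon_2=(d-1)\epsilon_1+\epsilon_2$ has degree $d$ and is not the smallest degree-$d$ multi-index $d\epsilon_1$, so $\theta+\epsilon_2>\alpha$ and hence $\theta+\epsilon_2\notin\L$ by (iii). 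Either case produces the required $i<j$ and $\theta\in\L$.

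I do not expect a genuine obstacle here. The two points needing care are the direction of the co-lexicographic monotonicity (raising the index of a unit of mass increases the order, lowering it decreases the order) and the case split, which is dictated by whether $\alpha$ is supported on $\{1\}$ — precisely the feature isolating $\alpha=\epsilon_1$ (and the degenerate $\alpha=0$) from every other $\alpha$. These excluded configurations are exactly the ranks $r\le 2$, for which $\M^{(\l,\mu)}$ genuinely is of the form \eqref{tensor1}, so their absence from the statement is to be expected.
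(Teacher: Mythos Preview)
Your proof is correct and uses the same two-case split as the paper, according to whether $\alpha$ is supported on the first coordinate alone. In the case where some $\alpha_j\ge 1$ with $j\ge 2$, the paper instead chooses $\theta=|\alpha|\,\epsilon_1$ and checks $\theta-\epsilon_1+\epsilon_j\le\alpha$ by a short contradiction argument, whereas your choice $\theta=\alpha-\epsilon_j+\epsilon_1$ makes $\theta-\epsilon_1+\epsilon_j=\alpha$ immediate---a mild streamlining of the same idea.
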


\begin{proof}
We divide the proof into two cases. In the first case, we prove the existence of such $\theta, i, j$ by assuming $\alpha = (|\alpha|, 0, \ldots, 0)$. In the second case, we assume that $\alpha \neq (|\alpha|, 0, \ldots, 0)$ and prove that such $\theta, i, j$ exist.
\begin{itemize}
\item[Case 1:] Suppose $\alpha = (|\alpha|, 0, \ldots, 0)$. Since $\alpha \neq \epsilon_1$, it follows that $|\alpha| > 1$. Consider $\theta = \alpha - \epsilon_1$, $i = 1$ and any $j > 1$. Then we have $\theta - \epsilon_1 + \epsilon_j \in \L$, but $\theta + \epsilon_j \notin \L$.

\item[Case 2:] Assume that $\alpha \neq (|\alpha|, 0, \ldots, 0)$. There exists $k > 1$ such that $\alpha_k \geq 1$. Consider $\theta = (|\alpha|, 0, \ldots, 0)$, $i = 1$ and $j = k$. Since $|\theta + \epsilon_j| = |\alpha| + 1$, it follows that $\theta + \epsilon_j \notin \L$.

We claim that $\theta - \epsilon_1 + \epsilon_k \in \L$. On the contrary, assume $\theta - \epsilon_1 + \epsilon_k > \alpha$. Note that $|\theta - \epsilon_1 + \epsilon_k| = |\alpha|.$ Since $\alpha_k \geq 1$, we have $0 \leq (\theta - \epsilon_1 + \epsilon_k)_l \leq \alpha_l$ for every $l \geq 2$ and therefore, it follows from our assumption that $(\theta - \epsilon_1 + \epsilon_k)_l = \alpha_l$ for all $l \geq 2$. Also, the inequality $\theta - \epsilon_1 + \epsilon_k > \alpha$ implies that $\alpha_1 < (\theta - \epsilon_1 + \epsilon_k)_1 = |\alpha| - 1$. Thus 
$$|\alpha| = \displaystyle \sum_{l = 1}^n \alpha_l = \alpha_1 + \alpha_k < |\alpha| - 1 + 1 = |\alpha|,$$
which is a contradiction. This proves the claim.
\end{itemize}
The verification of these two cases completes the proof of the lemma.
\end{proof}

\begin{thm}\label{inequivalance with elimentary homog op}
For each $1 \leq i \leq n$, assume that adjoint of the multiplication operator on a reproducing kernel Hilbert space $\mathcal{H}_i$ with the reproducing kernel $K_i$ is a homogeneous operator in $\mathrm B_{r_i}(\D)$ and denote $M$ to be the $n$ - tuple of multiplication operators on $\displaystyle  \otimes_{i = 1}^n \mathcal{H}_i$. If the adjoint of $\M^{(\l,\mu)}$ is in $\mathrm B_r(\D^n)$ with $r > 2$ then $M$ and $\M^{(\l, \mu)}$ are unitarily inequivalent.
\end{thm}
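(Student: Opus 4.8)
The plan is to compare the two $n$-tuples via a curvature invariant, following the same philosophy as Lemma~\ref{equiv1}. Suppose, for contradiction, that $M$ and $\M^{(\l,\mu)}$ are unitarily equivalent. Then the associated hermitian holomorphic vector bundles are isomorphic, so their curvatures are pointwise similar. Since $M$ is, by hypothesis, an $n$-tuple of multiplication operators on a tensor product $\otimes_{i=1}^n \mathcal H_i$ of one-variable Cowen--Douglas bundles, its curvature has a very rigid form: the bundle $E_M$ splits as an external tensor product $E_1 \boxtimes \cdots \boxtimes E_n$ of line-or-higher-rank bundles over $\D$, and consequently the mixed curvature components $\mathcal K_{ij}(z)$ for $i \neq j$ vanish identically. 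Indeed, if $K_M(z,w) = \prod_i K_i(z_i, w_i)$ then $K_M(z,z)^{-1}\del_i K_M(z,z)$ depends only on the variable $z_i$ (tensored with identities in the other slots), so $\dbar_j$ of it is zero when $j \neq i$. Thus I would first record the lemma-level fact: for $M$ as in the statement, $\mathcal K^M_{ij}(z) = 0$ for all $i \neq j$ and all $z$.

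Next I would compute a mixed curvature component of $\M^{(\l,\mu)}$ at the origin and show it is nonzero, which is where Lemma~\ref{existence of theta} enters and where the hypothesis $r > 2$ (equivalently $\a \neq \e_1$, since $r = |\L| = \prod(\a_i+1)$, and $r>2$ forces $\a$ to have more than one part or a part of size $\geq 2$) is used. From the canonical decomposition in Proposition~\ref{canonical decomposition of kernel}, $K^{(\l,\mu)}(z,z) = (1-|z|^2)^{-\l}D(|z|^2)\exp(\sum \ov z_i \tilde S_i)\,B\,\exp(\sum z_i \tilde S_i^*)D(|z|^2)$ with $B = K^{(\l,\mu)}(0,0)$ diagonal, and a direct expansion — entirely analogous to \eqref{curvature} — gives, for $i \neq j$,
\[
\mathcal K^{(\l,\mu)}_{ij}(0) = -\,\tilde S_j^* B^{-1}\tilde S_i B + B^{-1}\tilde S_i B\,\tilde S_j^*,
\]
the terms involving $D$ and the scalar prefactor dropping out because $\del_i\dbar_j$ of $(1-|z|^2)^{-\l}$ and of $D(|z|^2)$ vanish at $0$ for $i\neq j$. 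Evaluating on a basis vector $e_\theta$: $\tilde S_j^* B^{-1}\tilde S_i B\, e_\theta$ is a multiple of $e_{\theta + \e_i - \e_j}$ (nonzero precisely when $\theta+\e_i$ and $\theta+\e_i-\e_j$ lie in $\L$), while $B^{-1}\tilde S_i B\,\tilde S_j^* e_\theta$ is a multiple of $e_{\theta - \e_j + \e_i}$ (nonzero when $\theta-\e_j$ and $\theta-\e_j+\e_i$ lie in $\L$). So the $(\theta-\e_j+\e_i,\,\theta)$ entry of $\mathcal K^{(\l,\mu)}_{ij}(0)$ can fail to cancel only if exactly one of $\theta+\e_i$, $\theta-\e_j$ is "out of range."

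This is exactly the configuration Lemma~\ref{existence of theta} produces: choosing indices and an element $\eta \in \L$ with $\eta - \e_i + \e_j \in \L$ but $\eta + \e_j \notin \L$, I set $\theta := \eta - \e_i + \e_j$, so that $\theta - \e_j + \e_i = \eta \in \L$ and $\theta - \e_j = \eta - \e_i \in \L$ (hence the second term contributes), whereas $\theta + \e_i = \eta + \e_j \notin \L$ (hence the first term is absent), giving a nonzero off-diagonal entry of $\mathcal K^{(\l,\mu)}_{ij}(0)$; relabelling $i,j$ if necessary to match the lemma's $i<j$ is harmless since the argument is symmetric in the roles. Therefore $\mathcal K^{(\l,\mu)}_{ij}(0) \neq 0$ for some pair $i \neq j$. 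But a similarity $\mathcal K^M_{ij} \sim \mathcal K^{(\l,\mu)}_{ij}$ with $\mathcal K^M_{ij} = 0$ forces $\mathcal K^{(\l,\mu)}_{ij} = 0$, a contradiction. Hence $M$ and $\M^{(\l,\mu)}$ cannot be unitarily equivalent.

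The main obstacle I anticipate is bookkeeping rather than conceptual: carefully extracting the $\del_i\dbar_j$-coefficient at the origin from the product decomposition of $K^{(\l,\mu)}(z,z)$ and confirming that only the two operator terms survive (so the formula above is correct), and then tracking which $e_\theta \mapsto e_{\theta+\e_i-\e_j}$ matrix entries are present in each term so that the non-cancellation extracted from Lemma~\ref{existence of theta} is genuine. One should also make explicit at the outset the translation "$r>2 \iff \a\neq\e_1$" so that Lemma~\ref{existence of theta} is applicable; this is immediate from $r = |\L| = \prod_{k}(\a_k+1)$.
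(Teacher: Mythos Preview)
Your strategy is exactly the paper's: show that the mixed curvature component $\mathcal K_{ij}$ ($i\neq j$) vanishes for any tensor-product kernel but is nonzero for $K^{(\l,\mu)}$, with Lemma~\ref{existence of theta} supplying the non-vanishing. The gap is in the explicit formula you claim for the mixed curvature of the \emph{unnormalized} kernel. From Proposition~\ref{canonical decomposition of kernel} one finds, for $i\neq j$, that $\del_iK^{(\l,\mu)}|_0=B\tilde S_i^{\,*}$, $\dbar_jK^{(\l,\mu)}|_0=\tilde S_jB$ and $\dbar_j\del_iK^{(\l,\mu)}|_0=\tilde S_jB\tilde S_i^{\,*}$, so with the definition $\mathcal K_{ij}=\dbar_j(K^{-1}\del_iK)$ used in the paper,
\[
\mathcal K^{(\l,\mu)}_{ij}(0)=-B^{-1}(\tilde S_jB)B^{-1}(B\tilde S_i^{\,*})+B^{-1}\tilde S_jB\tilde S_i^{\,*}=0.
\]
The two surviving shift terms are literally the same operator with opposite signs; they do \emph{not} route through different intermediate indices $\theta+\e_i$ versus $\theta-\e_j$ as your formula $-\tilde S_j^{\,*}B^{-1}\tilde S_iB+B^{-1}\tilde S_iB\tilde S_j^{\,*}$ would have it. Thus the ``direct expansion analogous to \eqref{curvature}'' you invoke does not yield your displayed expression, and the non-cancellation argument built on it collapses. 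The paper sidesteps this by passing to the \emph{normalized} kernel $\hat K^{(\l,\mu)}$: because $\del_i\hat K|_0=\dbar_j\hat K|_0=0$, the curvature at $0$ is simply $\dbar_j\del_i\hat K|_0$, and the additional factors $\exp(-\sum z_k\tilde S_k^{\,*})$, $\exp(-\sum\bar z_k\tilde S_k)$ in $\hat K$ generate genuinely different cross terms, leaving the nonzero expression $B^{-1/2}\tilde S_jB\tilde S_i^{\,*}B^{-1/2}-B^{1/2}\tilde S_i^{\,*}B^{-1}\tilde S_jB^{1/2}$ to which Lemma~\ref{existence of theta} applies directly (pick the $\theta$ of that lemma and observe the second term kills $e_\theta$ while the first does not).

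A minor correction: your identity $r=\prod_k(\a_k+1)$ is false, since $\L=\{\b:\b\leq\a\}$ is taken in the graded colexicographic order, not componentwise (for instance $\a=\e_2$ gives $r=3$). The implication $r>2\Rightarrow\a\neq\e_1$ you actually need still holds, but for the simpler reason that $\a=\e_1$ forces $\L=\{0,\e_1\}$.
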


\begin{proof}
Since $M^*$ is in $\mathrm B_{\prod_i r_i} (\D^n)$, it follows that if $\prod_i r_i \neq r$, then $M$ and $\M^{(\l,\mu)}$ are not unitarily equivalent. Therefore we assume that $\prod_i  r_i = r$.
Note that the adjoint of both the tuples $\M^{(\l,\mu)}$ and $M$ are in $\mathrm B_r(\D^n)$. Thus if $\M^{(\l, \mu)}$ and $M$ are unitarily equivalent, then the corresponding curvature components of $\hat{K}^{(\l,\mu)}$ and $\hat{K}$ are similar, where $\hat{K}^{(\l,\mu)}$ and $\hat{K}$ are normalized kernels of $K^{(\l,\mu)}$ and $K$ at origin, respectively (cf. \eqref{normalized kernel}).

A direct computation shows that 
\begin{equation}\label{curvature1}
\hat{\mathcal{K}}^{(\l,\mu)}_{ij}(0) = \bar{\partial}_j \partial_i \hat{K}^{(\l, \mu)}(z,z)|_{z=0} = B^{-\frac{1}{2}}\tilde{S}_jB\tilde{S}_i^*B^{-\frac{1}{2}} - B^{\frac{1}{2}}\tilde{S}_i^* B^{-1}\tilde{S}_j B^{\frac{1}{2}},
\end{equation}
for $1 \leq i \neq j \leq n$, where $B =K^{(\l,\mu)}(0,0)$ and $\tilde{S}_i$ is as in the equation \eqref{shift}. By Lemma \ref{existence of theta}, there exists a $\theta$ in $\L$ and $1 \leq p \neq q \leq n$ such that $\theta - \epsilon_p + \epsilon_q \in \L$, but $\theta + \epsilon_q \notin \L$. This implies that $B^{-\frac{1}{2}}\tilde{S}_qB\tilde{S}_p^*B^{-\frac{1}{2}}e_\theta \neq 0$, but $B^{\frac{1}{2}}\tilde{S}_p^* B^{-1}\tilde{S}_q B^{\frac{1}{2}} e_\theta = 0$. Therefore, we have $\hat{\mathcal{K}}^{(\l,\mu)}_{pq}(0) \neq 0$.

On the other hand, since $\hat{K} = \displaystyle \otimes_{i = 1}^n \hat{K}_i$, it follows that $\hat{\mathcal{K}}_{ij}(0) = \bar{\partial}_j \partial_i \hat{K}(z,z)|_{z=0} = 0$, for any $1 \leq i \neq j \leq n$. This implies that $\hat{\mathcal{K}}_{pq}(0)$ and $\hat{\mathcal{K}}^{(\l,\mu)}_{pq}(0)$ are not similar. Therefore, $M$ and $\M^{(\l,\mu)}$ are not unitarily equivalent.
\end{proof}

\medskip

\textit{Acknowledgement.} The authors are grateful to Professor Gadadhar Misra for his invaluable comments and suggestions in preparation of this article. The authors also thankful to the anonymous referee for the comments in improving the clarity of the present article.

\end{document}